\documentclass{article}%
\usepackage{amsmath}
\usepackage{amsfonts}
\usepackage{amssymb}
\usepackage{graphicx}%
\setcounter{MaxMatrixCols}{30}
%TCIDATA{OutputFilter=latex2.dll}
%TCIDATA{Version=5.00.0.2606}
%TCIDATA{CSTFile=40 LaTeX article.cst}
%TCIDATA{Created=Tuesday, May 03, 2011 10:36:12}
%TCIDATA{LastRevised=Monday, September 26, 2011 16:47:28}
%TCIDATA{<META NAME="GraphicsSave" CONTENT="32">}
%TCIDATA{<META NAME="SaveForMode" CONTENT="1">}
%TCIDATA{BibliographyScheme=Manual}
%TCIDATA{<META NAME="DocumentShell" CONTENT="Standard LaTeX\Blank - Standard LaTeX Article">}
%TCIDATA{Language=American English}
\newtheorem{theorem}{Theorem}

\newtheorem{definition}[theorem]{Definition}
\newtheorem{example}[theorem]{Example}

\newtheorem{remark}[theorem]{Remark}

\newenvironment{proof}[1][Proof]{\noindent\textbf{#1.} }{\ \rule{0.5em}{0.5em}}
\textwidth 154mm
\textheight 218mm
\oddsidemargin=11.6mm
\evensidemargin=11.6mm
\begin{document}

\title{Transmutations, $L$-bases and complete families of solutions of the stationary
Schr\"{o}dinger equation in the plane}
\author{Hugo M. Campos, Vladislav V. Kravchenko and Sergii M. Torba\\{\small Department of Mathematics, CINVESTAV del IPN, Unidad Queretaro, }\\{\small Libramiento Norponiente No. 2000, Fracc. Real de Juriquilla,
Queretaro, }\\{\small Qro. C.P. 76230 MEXICO e-mail:
vkravchenko@qro.cinvestav.mx\thanks{Research was supported by CONACYT, Mexico.
Hugo Campos additionally acknowledges the support by FCT, Portugal. Research
of the third-named author was supported by DFFD, Ukraine (GP/F32/030) and by
SNSF, Switzerland (JRP IZ73Z0 of SCOPES 2009--2012). }}}
\maketitle

\begin{abstract}
An $L$-basis associated to a linear second-order ordinary differential
operator $L$ is an infinite sequence of functions $\left\{  \varphi
_{k}\right\}  _{k=0}^{\infty}$ such that $L\varphi_{k}=0$ for $k=0,1$,
$L\varphi_{k}=k(k-1)\varphi_{k-2}$, for $k=2,3,\ldots$ and all $\varphi_{k}$
satisfy certain prescribed initial conditions. We study the transmutation
operators related to $L$ in terms of the transformation of powers of the
independent variable $\left\{  (x-x_{0})^{k}\right\}  _{k=0}^{\infty}$ to the
elements of the $L$-basis and establish a precise form of the transmutation
operator realizing this transformation. We use this transmutation operator to
establish a completeness of an infinite system of solutions of the stationary
Schr\"{o}dinger equation from a certain class. The system of solutions is
obtained as an application of the theory of bicomplex pseudoanalytic functions
and its completeness was a long sought result. Its use for constructing
reproducing kernels and solving boundary and eigenvalue problems has been
considered even without the required completeness justification. The obtained
result on the completeness opens the way for further development and
application of the tools of pseudoanalytic function theory.

\end{abstract}

\section{Introduction}

Transmutation operators are a widely used tool in the theory of linear
differential equations (see, e.g., \cite{Gilbert}, \cite{Carroll},
\cite{LevitanInverse}, \cite{Marchenko}, \cite{Trimeche} and the recent review
\cite{Sitnik}). It is well known that under certain quite general conditions
the transmutation operator transmuting the operator $A=-\frac{d^{2}}{dx^{2}%
}+q(x)$ into $B=-\frac{d^{2}}{dx^{2}}$ is a Volterra integral operator with
good properties. Its kernel can be obtained as a solution of the Goursat
problem for the Klein-Gordon equation with the variable coefficient. In the
book \cite{Fage} another approach to the transmutation was developed. It was
shown that to every (regular) linear second-order ordinary differential
operator $L$ one can associate a linear space spanned on a so-called $L$-basis
-- an infinite family of functions $\left\{  \varphi_{k}\right\}
_{k=0}^{\infty}$ such that $L\varphi_{k}=0$ for $k=0,1$, $L\varphi
_{k}=k(k-1)\varphi_{k-2}$, for $k=2,3,\ldots$ and all $\varphi_{k}$ satisfy
certain prescribed initial conditions. Then the operator of transmutation was
introduced as an operation transforming functions from one such linear space
corresponding to a certain operator $L$ to functions from another linear space
corresponding to another operator $M$, and the transformation consists in
substituting the $L$-basis with the $M$-basis preserving the same coefficients
in the expansion.

In the present work we find out how the canonical Volterra integral
transmutation operator acts on powers of the independent variable $x^{k}$
(which represent a basis associated with the operator $\frac{d^{2}}{dx^{2}}$),
introduce a parametrized family of transmutation operators and construct a
transmutation operator which transforms the powers $x^{k}$  into the functions
$\varphi_{k}$ from the $L$-basis. We prove that it is indeed a transmutation
and can be written in the form of a Volterra integral operator.

We apply this result to prove the completeness of certain families of
solutions of linear two-dimensional elliptic equations with variable
complex-valued coefficients. These families of solutions were obtained earlier
\cite{KrAntonio}, \cite{APFT} as scalar parts of bicomplex pseudoanalytic
formal powers and used for solving boundary value and eigenvalue problems
\cite{CCK}. Nevertheless no result on their completeness even in simplest
cases was known due to profound differences between complex and bicomplex
pseudoanalytic function theories and inapplicability of many classical results
and techniques in the bicomplex situation. The use of the constructed
transmutation operators and their extremely fortunate transformation
properties regarding the $L$-bases allow us to observe that the infinite
families of solutions mentioned above are nothing but the transmuted harmonic
polynomials. Using their well known completeness properties together with the
properties of the Volterra integral transmutation operators we obtained
several results on the completeness of families of solutions for equations
with variable complex-valued coefficients.

In Section 2 we introduce the definition and some basic facts about
transmutations together with an example which we constructed for illustrating
some results of the present work. In Section 3 we introduce the $L$-basis as a
system of recursive integrals. In Section 4 we study the action of the
transmutation operators on the recursive integrals and construct the
transmutation operator which transforms powers of $x$ into functions of the
$L$-basis. In Section 5 we introduce several definitions and results from the
recently developed bicomplex pseudoanalytic function theory and explain its
relation to linear second-order elliptic equations with variable
complex-valued coefficients. In Section 6 we construct infinite families of
solutions for a class of such equations and show that they are images of
harmonic polynomials under the transmutation operator. We use this fact to
prove their completeness under certain additional conditions. Section 7
contains some concluding remarks.

\section{Transmutation operators for Sturm-Liouville
equations\label{SectTransmSL}}

According to the definition given by Levitan \cite{LevitanInverse}, let $E$ be
a linear topological space, $A$ and $B$ be linear operators: $E\rightarrow E$.
Let $E_{1}$ and $E_{2\text{ }}$be closed subspaces of $E$.

\begin{definition}
A linear invertible operator $T$ defined on the whole $E$ and acting from
$E_{1}$ to $E_{2}$ is called a transmutation operator for the pair of
operators $A$ and $B$ if it fulfills the following two conditions.
\end{definition}

1. \textit{Both the operator }$T$\textit{ and its inverse }$T^{-1}$\textit{
are continuous in }$E$;

2. \textit{The following operator equality is valid}%

\begin{equation}
AT=TB \label{ATTB}%
\end{equation}

\textit{or which is the same}%
\[
A=TBT^{-1}.
\]
\ \ Our main interest concerns the situation when $A=-\frac{d^{2}}{dx^{2}%
}+q(x)$, $B=-\frac{d^{2}}{dx^{2}}$, \ and $q$ is a continuous complex-valued
function. Hence for our purposes it will be sufficient to consider the
functional space $E=C^{2}[a,b]$ with the topology of uniform convergency. For
simplicity we will assume that the interval is symmetric with respect to the
origin, thus $E=C^{2}[-a,a]$.

An operator of transmutation for such $A$ and $B$ can be realized in the form
(see, e.g., \cite{LevitanInverse} and \cite{Marchenko})%

\begin{equation}
Tu(x)=u(x)+\int_{-x}^{x}K(x,t)u(t)dt \label{T}%
\end{equation}
where $K(x,t)$ is a unique solution of the Goursat problem%

\begin{equation}
\left( \frac{\partial^{2}}{\partial x^{2}}-q(x)\right) K(x,t)=\frac
{\partial^{2}}{\partial t^{2}}K(x,t), \label{Goursat1}%
\end{equation}%
\begin{equation}
K(x,x)=\frac{1}{2}\int_{0}^{x}q(s)ds,\qquad K(x,-x)=0. \label{Goursat2}%
\end{equation}

An important property of this transmutation operator consists in the way how
it maps solutions of the equation%

\begin{equation}
v^{\prime\prime}+\omega^{2}v=0 \label{SLomega1}%
\end{equation}
into solutions of the equation%

\begin{equation}
u^{\prime\prime}-q(x)u+\omega^{2}u=0 \label{SLomega2}%
\end{equation}
where $\omega$ is a complex number. Denote by $e_{0}(i\omega,x)$ the solution
of (\ref{SLomega2}) satisfying the initial conditions%

\begin{equation}
e_{0}(i\omega,0)=1\qquad\text{and}\qquad e_{0}^{\prime}(i\omega,0)=i\omega.
\label{initcond}%
\end{equation}
The subindex "$0$" indicates that the initial conditions correspond to the
point $x=0$ and the letter "$e$" reminds us that the initial values coincide
with the\ initial values of the function $e^{i\omega x}$.

The transmutation operator (\ref{T}) maps $e^{i\omega x}$ into $e_{0}%
(i\omega,x)$,
\begin{equation}
e_{0}(i\omega,x)=T[e^{i\omega x}] \label{e0=Te}%
\end{equation}
(see \cite[Theorem 1.2.1]{Marchenko}).

Following \cite{Marchenko} we introduce the notations%

\[
K(x,t;h)=h+K(x,t)+K(x,-t)+h\int_{t}^{x}\{K(x,\xi)-K(x,-\xi)\}d\xi
\]
where $h$ is a complex number, and
\[
K(x,t;\infty)=K(x,t)-K(x,-t).
\]

\begin{theorem}
\cite{Marchenko}\textit{ Solutions }$c(\omega,x;h)$\textit{ and }%
$s(\omega,x;\infty)$\textit{ of equation (}\ref{SLomega2}\textit{) satisfying
the initial conditions}%
\[
c(\omega,0;h)=1,\text{ \ \ \ \ \ \ \ \ \ \ \ \ }c_{x}^{\prime}(\omega,0;h)=h
\]%
\[
s(\omega,0;\infty)=0,\text{ \ \ \ \ \ \ \ \ \ \ \ }s_{x}^{\prime}%
(\omega,0;\infty)=1
\]
\textit{can be represented in the form}%
\begin{equation}
\mathit{\ }c(\omega,x;h)=\cos\omega x+\int_{0}^{x}K(x,t;h)\cos\omega t\,dt
\label{c cos}%
\end{equation}
\textit{and}%
\begin{equation}
s(\omega,x;\infty)=\frac{\sin\omega x}{\omega}+\int_{0}^{x}K(x,t;\infty
)\frac{\sin\omega t}{\omega}\,dt. \label{s sin}%
\end{equation}

\end{theorem}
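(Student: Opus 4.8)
The plan is to obtain both solutions as images under the transmutation operator $T$ of the corresponding solutions of (\ref{SLomega1}), exploiting the fact (\ref{e0=Te}) that $T[e^{i\omega x}]=e_{0}(i\omega,x)$. Since $T$ is linear and $e_{0}(\pm i\omega,x)$ solve (\ref{SLomega2}) with the initial data (\ref{initcond}), I would first record that
\[
T[\cos\omega x]=\tfrac{1}{2}\bigl(e_{0}(i\omega,x)+e_{0}(-i\omega,x)\bigr),\qquad T\Bigl[\tfrac{\sin\omega x}{\omega}\Bigr]=\tfrac{1}{2i\omega}\bigl(e_{0}(i\omega,x)-e_{0}(-i\omega,x)\bigr).
\]
Because $\cos\omega x$ and $\sin\omega x/\omega$ satisfy (\ref{SLomega1}) and $T$ is a transmutation (so $AT=TB$ by (\ref{ATTB})), both right-hand sides are solutions of (\ref{SLomega2}); evaluating at $x=0$ using (\ref{initcond}) gives value $1$ and derivative $0$ for the cosine image and value $0$ and derivative $1$ for the sine image.

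Next I would make the two integral representations explicit by folding the symmetric integral in (\ref{T}). Splitting $\int_{-x}^{x}=\int_{-x}^{0}+\int_{0}^{x}$ and substituting $t\mapsto-t$ in the first piece, the parity relations $\cos(-\omega t)=\cos\omega t$ and $\sin(-\omega t)=-\sin\omega t$ yield
\[
T[\cos\omega x]=\cos\omega x+\int_{0}^{x}\bigl(K(x,t)+K(x,-t)\bigr)\cos\omega t\,dt,
\]
\[
T\Bigl[\tfrac{\sin\omega x}{\omega}\Bigr]=\frac{\sin\omega x}{\omega}+\int_{0}^{x}\bigl(K(x,t)-K(x,-t)\bigr)\frac{\sin\omega t}{\omega}\,dt.
\]
The second formula already displays the kernel $K(x,t;\infty)=K(x,t)-K(x,-t)$, so together with the initial data $0,1$ computed above and the uniqueness of the solution of the Cauchy problem for (\ref{SLomega2}) this identifies it as $s(\omega,x;\infty)$ and proves (\ref{s sin}).

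For $c(\omega,x;h)$ I would take the superposition $c(\omega,x;h):=T[\cos\omega x]+h\,T[\sin\omega x/\omega]$, which by the previous paragraph solves (\ref{SLomega2}) with $c(\omega,0;h)=1$ and $c_{x}'(\omega,0;h)=h$, hence is the required solution. It then remains to collect the two integrals into the single kernel $K(x,t;h)$. The term $h\sin\omega x/\omega=h\int_{0}^{x}\cos\omega t\,dt$ contributes the constant $h$ inside the kernel, the $h$-free integral contributes $K(x,t)+K(x,-t)$, and for the remaining piece $h\int_{0}^{x}(K(x,t)-K(x,-t))\sin\omega t/\omega\,dt$ I would write $\sin\omega t/\omega=\int_{0}^{t}\cos\omega\xi\,d\xi$ and interchange the order of integration over the triangle $0\le\xi\le t\le x$, turning it into $h\int_{0}^{x}\bigl(\int_{t}^{x}(K(x,\xi)-K(x,-\xi))\,d\xi\bigr)\cos\omega t\,dt$. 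Adding the three contributions reproduces exactly $K(x,t;h)=h+K(x,t)+K(x,-t)+h\int_{t}^{x}\{K(x,\xi)-K(x,-\xi)\}\,d\xi$ and hence (\ref{c cos}).

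The one genuinely computational step, and the place where care is needed, is this last Fubini-type interchange of the order of integration, which converts the $h$-multiple of the sine representation into the integral tail of $K(x,t;h)$ paired against $\cos\omega t$. Everything else is linearity of $T$, the parity bookkeeping, and uniqueness for the Cauchy problem for (\ref{SLomega2}); note in particular that this route does not require differentiating the kernel or invoking the Goursat conditions (\ref{Goursat2}) directly, since all initial values are inherited from (\ref{initcond}).
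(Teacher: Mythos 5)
Your proposal is correct. Note that the paper itself offers no proof of this statement: it is quoted verbatim from \cite{Marchenko}, so there is no internal argument to compare against; what you have written is essentially the classical derivation one finds in Marchenko's book, reconstructed from the two ingredients the paper does supply, namely the representation (\ref{T}) and the mapping property (\ref{e0=Te}). All the individual steps check out: the linear combinations $T[\cos\omega x]=\tfrac12\bigl(e_{0}(i\omega,x)+e_{0}(-i\omega,x)\bigr)$ and $T[\sin\omega x/\omega]=\tfrac{1}{2i\omega}\bigl(e_{0}(i\omega,x)-e_{0}(-i\omega,x)\bigr)$ are solutions of (\ref{SLomega2}) simply because $e_{0}(\pm i\omega,x)$ are (you do not even need the operator identity (\ref{ATTB}) here), the initial values $1,0$ and $0,1$ follow from (\ref{initcond}), the parity folding of $\int_{-x}^{x}$ is valid for either sign of $x$, and the interchange of the order of integration over the triangle $0\le\xi\le t\le x$ is legitimate since $K$ is continuous on a compact set, for complex $\omega$ as well. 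Your identification of $c(\omega,x;h)$ with $T[\cos\omega x]+hT[\sin\omega x/\omega]$ via uniqueness of the Cauchy problem, followed by the computation $h\int_{0}^{x}\cos\omega t\,dt=h\sin\omega x/\omega$ and the Fubini step reassembling $K(x,t;h)$, reproduces the kernel exactly. Two cosmetic remarks: the case $\omega=0$ in (\ref{s sin}) should be read as the limit $\sin\omega x/\omega\to x$ (a removable singularity, as both sides are entire in $\omega$), and your derivation incidentally shows that (\ref{c cos}) with $h=0$ is just the folded even part of (\ref{T}), which is consistent with the paper's later observation that $\mathbf{K}(x,t;0)=K(x,t)$.
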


The operators%
\[
T_{c}u(x)=u(x)+\int_{0}^{x}K(x,t;h)u(t)dt
\]
and%
\[
T_{s}u(x)=u(x)+\int_{0}^{x}K(x,t;\infty)u(t)dt
\]
are not transmutations on the whole space $C^{2}[-a,a]$, they even do not map
all solutions of (\ref{SLomega1}) into solutions of (\ref{SLomega2}). For
example, as we show below%

\[
\left(  -\frac{d^{2}}{dx^{2}}+q(x)\right)  T_{s}[1]\neq T_{s}\left[
-\frac{d^{2}}{dx^{2}}(1)\right]  =0
\]
when $q$ is constant.

\begin{example}
\label{ExampleTransmut} Transmutation operator for operators $A:=\frac{d^{2}%
}{dx^{2}}+c$, $c$ is a constant, and $B:=\frac{d^{2}}{dx^{2}}$. According to
\cite[(1.2.25), (1.2.26)]{Marchenko}, finding the kernel of transmutation
operator is equivalent to finding the function $H(s,t)=K(s+t,s-t)$, satisfying
Goursat problem
\[
\frac{\partial^{2}H(s,t)}{\partial s\partial t}=-cH(s,t),\quad H(s,0)=-\frac
{cs}{2},\quad H(0,t)=0.
\]
The solution of this problem is given by \cite[(4.85)]{Garab}
\[
H(s,t)=-\frac{c}{2}\int_{0}^{s}J_{0}\bigl(2\sqrt{ct(s-\xi)}\bigr)\,d\xi
=-\frac{\sqrt{cst}J_{1}(2\sqrt{cst})}{2t},
\]
where $J_{0}$ and $J_{1}$ are Bessel functions of first kind, and the formula
is valid even if the radicand is negative. Hence,
\begin{equation}
K(x,y)=H\left(  \frac{x+y}{2},\frac{x-y}{2}\right)  =-\frac{1}{2}\frac
{\sqrt{c(x^{2}-y^{2})}J_{1}\bigl(\sqrt{c(x^{2}-y^{2})}\bigr)}{x-y}.
\label{ExampleKernel}%
\end{equation}
From \eqref{ExampleKernel} we get the `sine' kernel
\begin{equation}
K(x,t;\infty)=-\frac{t\sqrt{c(x^{2}-t^{2})}J_{1}\bigl(\sqrt{c(x^{2}-t^{2}%
)}\bigr)}{x^{2}-t^{2}}, \label{ExampleSineKernel}%
\end{equation}
and can check the above statement about operator $T_{s}$,
\begin{gather*}
T_{s}[1](x)=1-\int_{0}^{x}\frac{t\sqrt{c(x^{2}-t^{2})}J_{1}\bigl(\sqrt
{c(x^{2}-t^{2})}\bigr)}{x^{2}-t^{2}}\,dt=J_{0}(x\sqrt{c}),\\
\left(  \frac{d^{2}}{dx^{2}}+c\right)  T_{s}[1]=\frac{\sqrt{c}J_{1}(x\sqrt
{c})}{x}\neq0.
\end{gather*}

\end{example}

\section{A complete system of recursive integrals}

Let $f\in C^{2}(a,b)\cap C^{1}[a,b]$ be a complex valued function and
$f(x)\neq0$ for any $x\in\lbrack a,b]$. The interval $(a,b)$ is supposed to be
finite. Let us consider the following auxiliary functions%
\begin{equation}
\widetilde{X}^{(0)}(x)\equiv X^{(0)}(x)\equiv1, \label{X1}%
\end{equation}%
\begin{equation}
\widetilde{X}^{(n)}(x)=n%
%TCIMACRO{\dint \limits_{x_{0}}^{x}}%
%BeginExpansion
{\displaystyle\int\limits_{x_{0}}^{x}}
%EndExpansion
\widetilde{X}^{(n-1)}(s)\left(  f^{2}(s)\right)  ^{(-1)^{n-1}}\,\mathrm{d}s,
\label{X2}%
\end{equation}%
\begin{equation}
X^{(n)}(x)=n%
%TCIMACRO{\dint \limits_{x_{0}}^{x}}%
%BeginExpansion
{\displaystyle\int\limits_{x_{0}}^{x}}
%EndExpansion
X^{(n-1)}(s)\left(  f^{2}(s)\right)  ^{(-1)^{n}}\,\mathrm{d}s, \label{X3}%
\end{equation}
where $x_{0}$ is an arbitrary fixed point in $[a,b]$. We introduce the
infinite system of functions $\left\{  \varphi_{k}\right\}  _{k=0}^{\infty}$
defined as follows%

\begin{equation}
\varphi_{k}(x)=\left\{
\begin{tabular}
[c]{ll}%
$f(x)X^{(k)}(x)$, & $k$ \text{odd,}\\
$f(x)\widetilde{X}^{(k)}(x)$, & $k$ \text{even,}%
\end{tabular}
\ \ \ \ \ \ \ \ \ \ \right.  \ \label{phik}%
\end{equation}
where the definition of $X^{(k)}$ and $\widetilde{X}^{(k)}$ is given by
(\ref{X1})-(\ref{X3}) with $x_{0}$ being an arbitrary point of the interval
$[a,b]$.

\begin{example}
\label{ExamplePoly}Let $f\equiv1$, $a=0$, $b=1$. Then it is easy to see that
choosing $x_{0}=0$ we have $\varphi_{k}(x)=x^{k}$, $k\in\mathbb{N}_{0}$ where
by $\mathbb{N}_{0}$ we denote the set of non-negative integers.
\end{example}

In \cite{KrCMA2011} it was shown that the system $\left\{  \varphi
_{k}\right\}  _{k=0}^{\infty}$ is complete in $L_{2}(a,b)$ and in \cite{KMoT}
its completeness in the space of piecewise differentiable functions with
respect to the maximum norm was obtained and the corresponding series
expansions in terms of the functions $\varphi_{k}$ were studied.

The system (\ref{phik}) is closely related to the notion of the $L$-basis
introduced and studied in \cite{Fage}. Here the letter $L$ corresponds to a
linear ordinary differential operator. This becomes more transparent from the
following result obtained in \cite{KrCV08} (for additional details and simpler
proof see \cite{APFT} and \cite{KrPorter2010}) establishing the relation of
the system of functions $\left\{  \varphi_{k}\right\}  _{k=0}^{\infty}$ to
Sturm-Liouville equations.

\begin{theorem}
\label{ThGenSolSturmLiouville}\cite{KrCV08} Let $q$ be a continuous complex
valued function of an independent real variable $x\in\lbrack a,b],$ $\lambda$
be an arbitrary complex number. Suppose there exists a solution $f$ of the
equation
\begin{equation}
f^{\prime\prime}-qf=0\label{SLhom}%
\end{equation}
on $(a,b)$ such that $f\in C^{2}[a,b]$ and $f\neq0$ on $[a,b]$. Then the
general solution of the equation
\begin{equation}
u^{\prime\prime}-qu=\lambda u\label{SLlambda}%
\end{equation}
on $(a,b)$ has the form%
\[
u=c_{1}u_{1}+c_{2}u_{2}%
\]
where $c_{1}$ and $c_{2}$ are arbitrary complex constants,
\begin{equation}
u_{1}=%
%TCIMACRO{\dsum \limits_{k=0}^{\infty}}%
%BeginExpansion
{\displaystyle\sum\limits_{k=0}^{\infty}}
%EndExpansion
\frac{\lambda^{k}}{(2k)!}\varphi_{2k}\quad\text{and}\quad u_{2}=%
%TCIMACRO{\dsum \limits_{k=0}^{\infty}}%
%BeginExpansion
{\displaystyle\sum\limits_{k=0}^{\infty}}
%EndExpansion
\frac{\lambda^{k}}{(2k+1)!}\varphi_{2k+1}\label{u1u2}%
\end{equation}
and both series converge uniformly on $[a,b]$.
\end{theorem}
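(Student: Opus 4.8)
The plan is to reduce everything to a single differential recurrence for the functions $\varphi_{k}$ and then to recognize the two series in \eqref{u1u2} as the fundamental system fixed by the initial data at $x_{0}$. Writing $L:=\frac{d^{2}}{dx^{2}}-q$, so that \eqref{SLhom} reads $Lf=0$ and \eqref{SLlambda} reads $Lu=\lambda u$, the identity I would establish first is
\begin{equation}
L\varphi_{k}=k(k-1)\varphi_{k-2}\quad(k\ge 2),\qquad L\varphi_{0}=L\varphi_{1}=0.\label{Lrec}
\end{equation}
Granting \eqref{Lrec} together with the legitimacy of term-by-term differentiation, a reindexing gives at once $Lu_{1}=\sum_{k\ge1}\frac{\lambda^{k}}{(2k)!}2k(2k-1)\varphi_{2k-2}=\lambda u_{1}$, and likewise $Lu_{2}=\lambda u_{2}$, so both series solve \eqref{SLlambda}.

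The efficient route to \eqref{Lrec} is the P\'olya factorization attached to the nonvanishing $f$: using $f''=qf$ one checks that $Lu=\frac{1}{f}\frac{d}{dx}\bigl(f^{2}\frac{d}{dx}(u/f)\bigr)$ for every $u\in C^{2}$. Since $\varphi_{k}/f$ equals $\widetilde X^{(k)}$ or $X^{(k)}$ according to the parity of $k$, the weight $(f^{2})^{(-1)^{k-1}}$ appearing in \eqref{X2}--\eqref{X3} is exactly the one that cancels the factor $f^{2}$ supplied by the factorization: one differentiation yields $f^{2}\frac{d}{dx}\widetilde X^{(k)}=k\widetilde X^{(k-1)}$, and the second differentiation reintroduces the complementary weight from the definition of $\widetilde X^{(k-1)}$, giving $\frac{d}{dx}\bigl(k\widetilde X^{(k-1)}\bigr)=k(k-1)f^{2}\widetilde X^{(k-2)}$; dividing by $f$ produces \eqref{Lrec}, and the odd case with $X^{(k)}$ is identical. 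The base cases are immediate: $\varphi_{0}=f$ is annihilated by $L$ by hypothesis, and $\varphi_{1}=f\int_{x_{0}}^{x}f^{-2}$ is precisely the reduction-of-order partner of $f$, hence also a solution of the homogeneous equation.

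Next I would secure the analytic estimates. A one-line induction on \eqref{X2}--\eqref{X3}, using $|(f^{2})^{\pm1}|\le C:=\max\bigl(\max_{[a,b]}|f|^{2},\,(\min_{[a,b]}|f|)^{-2}\bigr)$, gives $|X^{(n)}(x)|,|\widetilde X^{(n)}(x)|\le \bigl(C|x-x_{0}|\bigr)^{n}$, whence $|\varphi_{k}|\le \max_{[a,b]}|f|\,\bigl(C(b-a)\bigr)^{k}$; comparison with $\sum_{k}r^{k}/(2k)!$ via the Weierstrass $M$-test shows that $u_{1},u_{2}$ converge uniformly on $[a,b]$. To move $L$ inside the sums I also need the differentiated series. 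Differentiating \eqref{phik}, where $(\widetilde X^{(k)})'=k\widetilde X^{(k-1)}/f^{2}$ for even $k$ and $(X^{(k)})'=kX^{(k-1)}/f^{2}$ for odd $k$, gives $|\varphi_{k}'|\lesssim (1+k)\bigl(C(b-a)\bigr)^{k-1}$, and rewriting \eqref{Lrec} as $\varphi_{k}''=k(k-1)\varphi_{k-2}+q\varphi_{k}$ gives $|\varphi_{k}''|\lesssim k^{2}\bigl(C(b-a)\bigr)^{k-2}$. The polynomial factors are harmless against $(2k)!$ and $(2k+1)!$, so the series of first and second derivatives also converge uniformly; consequently $u_{1},u_{2}\in C^{2}[a,b]$ and the termwise application of $L$ used above is justified.

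Finally I would pin down the initial data. Because $X^{(n)}(x_{0})=\widetilde X^{(n)}(x_{0})=0$ for $n\ge1$ while $X^{(0)}=\widetilde X^{(0)}=1$, only the leading term of each series survives at $x_{0}$, so $u_{1}(x_{0})=f(x_{0})$ and $u_{2}(x_{0})=0$; differentiating and using $(X^{(1)})'=f^{-2}$ gives $u_{1}'(x_{0})=f'(x_{0})$ and $u_{2}'(x_{0})=1/f(x_{0})$. Hence the Wronskian $W(u_{1},u_{2})(x_{0})=f(x_{0})\cdot\tfrac{1}{f(x_{0})}-f'(x_{0})\cdot 0=1\ne 0$, so $u_{1},u_{2}$ are linearly independent solutions of the second-order equation \eqref{SLlambda} and therefore form a fundamental system; the general solution is $c_{1}u_{1}+c_{2}u_{2}$. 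The hard part is not the algebra of \eqref{Lrec} but the convergence bookkeeping in the third step: the whole argument hinges on showing that the factorial denominators dominate the polynomially growing bounds for $\varphi_{k}'$ and $\varphi_{k}''$, since this is exactly what licenses interchanging $L$ with the infinite sums.
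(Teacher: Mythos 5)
Your proof is correct. Note that the paper itself does not prove this theorem: it is imported from the cited work of Kravchenko (the reference \cite{KrCV08}, with simpler proofs indicated in \cite{APFT} and \cite{KrPorter2010}), so there is no in-paper argument to compare against line by line. Your route is, however, essentially the standard SPPS argument from those sources, and it is consistent with everything the paper does use from this circle of ideas: the recurrence $\left(-\frac{d^{2}}{dx^{2}}+q\right)\varphi_{k}=k(k-1)\varphi_{k-2}$, $L\varphi_{0}=L\varphi_{1}=0$, is exactly the fact the paper quotes later (in the proof of its Theorem \ref{Th Transmute}), and your endpoint computation $u_{1}(x_{0})=f(x_{0})$, $u_{1}^{\prime}(x_{0})=f^{\prime}(x_{0})$, $u_{2}(x_{0})=0$, $u_{2}^{\prime}(x_{0})=1/f(x_{0})$ reproduces Remark \ref{RemInitialValues}, from which the unit Wronskian and hence the fundamental-system claim follow since the Wronskian of two solutions of \eqref{SLlambda} is constant. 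Your three supporting steps are all sound: the P\'olya factorization $Lu=\frac{1}{f}\bigl(f^{2}(u/f)^{\prime}\bigr)^{\prime}$ is valid because $f$ is nonvanishing and solves \eqref{SLhom}; the alternating weights $(f^{2})^{\pm1}$ in \eqref{X2}--\eqref{X3} do cancel as you say in both parity cases; and the inductive bound $|X^{(n)}|,|\widetilde{X}^{(n)}|\le(C|x-x_{0}|)^{n}$ with the factorial denominators justifies the $M$-test for the series and for the termwise first and second derivatives (your use of the already-established pointwise recurrence to bound $\varphi_{k}^{\prime\prime}$ is not circular, since the recurrence is derived by direct differentiation for each fixed $k$). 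Your closing emphasis is well placed: the convergence bookkeeping for the differentiated series is precisely what licenses applying $L$ term by term, and it is the only point where a careless write-up could leave a gap.
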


\begin{remark}
\label{RemInitialValues}It is easy to see that by definition the solutions
$u_{1}$ and $u_{2}$ satisfy the following initial conditions
\begin{equation}
u_{1}(x_{0})=f(x_{0}),\qquad u_{1}^{\prime}(x_{0})=f^{\prime}(x_{0}),
\label{initial1}%
\end{equation}%
\begin{equation}
u_{2}(x_{0})=0,\qquad u_{2}^{\prime}(x_{0})=1/f(x_{0}). \label{initial2}%
\end{equation}

\end{remark}

\section{Transmutations and systems of recursive integrals}

Let us obtain the expansion of the solution $e_{0}(i\omega,x)$ \ from Section
\ref{SectTransmSL} in terms of the functions $\varphi_{k}$. We suppose that
$f$ is a solution of (\ref{SLhom}) fulfilling the condition of Theorem
\ref{ThGenSolSturmLiouville} on a finite interval $(-a,a)$. We normalize $f$
in such a way that $f(0)=1$ and let $f^{\prime}(0)=h$ where $h$ is some
complex constant. Then according to Remark \ref{RemInitialValues} the
solutions (\ref{u1u2}) of equation (\ref{SLlambda}) have the following initial
values%
\[
u_{1}(0)=1,\qquad u_{1}^{\prime}(0)=h,\qquad u_{2}(0)=0,\qquad u_{2}^{\prime
}(0)=1.
\]
Hence due to (\ref{initcond}) we obtain $e_{0}(i\omega,x)=u_{1}(x)+(i\omega
-h)u_{2}(x)$. From (\ref{e0=Te}) and (\ref{u1u2}) we have the equality%
\[%
%TCIMACRO{\dsum \limits_{k=0}^{\infty}}%
%BeginExpansion
{\displaystyle\sum\limits_{k=0}^{\infty}}
%EndExpansion
\frac{(i\omega)^{2k}}{(2k)!}\varphi_{2k}(x)+(i\omega-h)%
%TCIMACRO{\dsum \limits_{k=0}^{\infty}}%
%BeginExpansion
{\displaystyle\sum\limits_{k=0}^{\infty}}
%EndExpansion
\frac{(i\omega)^{2k}}{(2k+1)!}\varphi_{2k+1}(x)=%
%TCIMACRO{\dsum \limits_{j=0}^{\infty}}%
%BeginExpansion
{\displaystyle\sum\limits_{j=0}^{\infty}}
%EndExpansion
\frac{(i\omega)^{j}x^{j}}{j!}+\int_{-x}^{x}\left(  K(x,t)%
%TCIMACRO{\dsum \limits_{j=0}^{\infty}}%
%BeginExpansion
{\displaystyle\sum\limits_{j=0}^{\infty}}
%EndExpansion
\frac{(i\omega)^{j}t^{j}}{j!}\right)  dt.
\]
As the series under the sign of integral converges uniformly and the kernel
$K(x,t)$ is at least continuously differentiable (for a continuous $q$
\cite{Marchenko}) we obtain the following relation%
\[%
%TCIMACRO{\dsum \limits_{k=0}^{\infty}}%
%BeginExpansion
{\displaystyle\sum\limits_{k=0}^{\infty}}
%EndExpansion
\frac{(i\omega)^{2k}}{(2k)!}\varphi_{2k}(x)+%
%TCIMACRO{\dsum \limits_{k=0}^{\infty}}%
%BeginExpansion
{\displaystyle\sum\limits_{k=0}^{\infty}}
%EndExpansion
\frac{(i\omega)^{2k+1}}{(2k+1)!}\varphi_{2k+1}(x)-h%
%TCIMACRO{\dsum \limits_{k=0}^{\infty}}%
%BeginExpansion
{\displaystyle\sum\limits_{k=0}^{\infty}}
%EndExpansion
\frac{(i\omega)^{2k}}{(2k+1)!}\varphi_{2k+1}(x)=%
%TCIMACRO{\dsum \limits_{j=0}^{\infty}}%
%BeginExpansion
{\displaystyle\sum\limits_{j=0}^{\infty}}
%EndExpansion
\frac{(i\omega)^{j}}{j!}\left(  x^{j}+\int_{-x}^{x}K(x,t)\,t^{j}dt\right)  .
\]
The equality holds for any $\omega$ hence we obtain the termwise relations%
\begin{equation}
\varphi_{k}=T[x^{k}]\text{\quad when }k\text{ is odd} \label{Txk_odd}%
\end{equation}
and
\begin{equation}
\varphi_{k}-\frac{h}{k+1}\varphi_{k+1}=T[x^{k}]\text{\quad when }%
k\in\mathbb{N}_{0}\text{ is even.} \label{Txk_even1}%
\end{equation}
Taking into account the first of these relations the second can be written
also as follows%
\begin{equation}
\varphi_{k}=T\left[  x^{k}+\frac{h}{k+1}x^{k+1}\right]  \text{\quad when }%
k\in\mathbb{N}_{0}\text{ is even.} \label{Txk_even2}%
\end{equation}

Thus, we proved the following statement.

\begin{theorem}
\label{Th Transmutation of Powers K}Let $q$ be a continuous complex valued
function of an independent real variable $x\in\lbrack-a,a]$, and $f$ be a
particular solution of (\ref{SLhom}) such that $f\in C^{2}[-a,a]$, $f\neq0$ on
$[-a,a]$  and normalized as $f(0)=1$. Denote $h:=f^{\prime}(0)\in\mathbb{C}$.
Suppose $T$ is the operator defined by (\ref{T}) where the kernel $K$ is a
solution of the problem (\ref{Goursat1}), (\ref{Goursat2}) and $\varphi_{k}$,
$k\in\mathbb{N}_{0}$ are functions defined by (\ref{phik}). Then equalities
(\ref{Txk_odd})-(\ref{Txk_even2}) hold.
\end{theorem}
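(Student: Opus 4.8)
The plan is to extract the three term-by-term identities (\ref{Txk_odd})--(\ref{Txk_even2}) from the single known mapping property (\ref{e0=Te}) of the transmutation operator $T$ on the exponential $e^{i\omega x}$, by viewing both sides as entire functions of the spectral parameter $\omega$ and comparing Taylor coefficients. The device is that $e^{i\omega x}$ packages all the powers $x^{k}$ in its Maclaurin series, while $e_{0}(i\omega,x)$ packages the $\varphi_{k}$ through the series (\ref{u1u2}); once the two generating functions are matched, equating coefficients does the rest.

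First I would write $e_{0}(i\omega,x)$ in the $L$-basis. The functions $u_{1}$ and $u_{2}$ of (\ref{u1u2}) solve (\ref{SLlambda}) with $\lambda=(i\omega)^{2}$, and by Remark \ref{RemInitialValues} together with the normalization $f(0)=1$, $f^{\prime}(0)=h$ they satisfy $u_{1}(0)=1$, $u_{1}^{\prime}(0)=h$, $u_{2}(0)=0$, $u_{2}^{\prime}(0)=1$. Since $e_{0}(i\omega,x)$ is the solution of (\ref{SLomega2}) with the initial data (\ref{initcond}), uniqueness of the initial value problem forces
\[
e_{0}(i\omega,x)=u_{1}(x)+(i\omega-h)u_{2}(x).
\]
Inserting the series (\ref{u1u2}) with $\lambda^{k}=(i\omega)^{2k}$ then expresses the left-hand side as a power series in $\omega$ with coefficients built from $\varphi_{2k}$ and $\varphi_{2k+1}$.

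Next I would expand the right-hand side $T[e^{i\omega x}]$. Writing $e^{i\omega x}=\sum_{j}\frac{(i\omega)^{j}x^{j}}{j!}$ and substituting into (\ref{T}), the only nontrivial point is to pull $T$ inside the sum, that is, to justify
\[
\int_{-x}^{x}K(x,t)\,e^{i\omega t}\,dt=\sum_{j=0}^{\infty}\frac{(i\omega)^{j}}{j!}\int_{-x}^{x}K(x,t)\,t^{j}\,dt.
\]
This is the one step that requires genuine care and is the main (if mild) obstacle: it rests on the uniform convergence of the exponential series on $[-a,a]$ and on the continuity of the kernel $K$ (which holds for continuous $q$ by the theory of the Goursat problem (\ref{Goursat1})--(\ref{Goursat2})), allowing the interchange of the finite-interval integral with the series. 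This gives $T[e^{i\omega x}]=\sum_{j}\frac{(i\omega)^{j}}{j!}T[x^{j}]$.

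Finally, both sides are entire in $\omega$ and agree for every complex $\omega$, so their coefficients coincide. Reading off the coefficient of $(i\omega)^{k}$ separately for odd and even $k$ yields the result: for odd $k$ only the $u_{2}$-series contributes, giving the term $\frac{1}{k!}\varphi_{k}$ and hence (\ref{Txk_odd}); for even $k$ the $u_{1}$-series contributes $\frac{1}{k!}\varphi_{k}$ while the $h$-correction in $u_{2}$ contributes $-\frac{h}{(k+1)!}\varphi_{k+1}$, giving (\ref{Txk_even1}) after clearing the factorials. The equivalent form (\ref{Txk_even2}) then follows at once by substituting the already proven odd-index identity $\varphi_{k+1}=T[x^{k+1}]$ and using linearity of $T$.
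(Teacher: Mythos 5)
Your proposal is correct and follows essentially the same route as the paper: both decompose $e_{0}(i\omega,x)=u_{1}(x)+(i\omega-h)u_{2}(x)$ via the initial values from Remark \ref{RemInitialValues}, expand $T[e^{i\omega x}]$ termwise using uniform convergence of the exponential series against the continuous kernel $K(x,t)$, and equate coefficients of powers of $i\omega$ to obtain (\ref{Txk_odd}) and (\ref{Txk_even1}), with (\ref{Txk_even2}) then following from linearity. Your explicit appeal to both sides being entire in $\omega$ is a slightly more careful justification of the coefficient-matching step than the paper's brief remark, but the argument is otherwise identical.
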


Thus, we clarified what is the result of application of the transmutation $T$
to the powers of the independent variable. This is very useful due to the fact
that as a rule the construction of the kernel $K(x,t)$ in a more or less
explicit form up to now is impossible. Our result gives an algorithm for
transmuting functions which can be represented or at least approximated by
finite or infinite polynomials in the situation when $K(x,t)$ is unknown.

\begin{remark}
Let $f$ be the solution of (\ref{SLhom}) satisfying the initial conditions
\begin{equation}
f(0)=1,\quad\text{and}\quad f^{\prime}(0)=0. \label{initcond 1 0}%
\end{equation}
If it does not vanish on $[-a,a]$ then from Theorem
\ref{Th Transmutation of Powers K} we obtain that $\varphi_{k}=T[x^{k}]$ for
any $k\in\mathbb{N}_{0}$. In general, of course there is no guaranty that the
solution with such initial values have no zeros on $[-a,a]$ and hence the
operator $T$ transmutes the powers of $x$ into $\varphi_{k}$ whose
construction is based on the solution $f$ satisfying (\ref{initcond 1 0}) only
in some neighborhood of the origin.
\end{remark}

Similarly to Theorem \ref{Th Transmutation of Powers K} we obtain the
following statement.

\begin{theorem}
\label{Th Transmutation of Powers Tc and Ts} Under the conditions of Theorem
\ref{Th Transmutation of Powers K} the following equalities are valid%
\begin{equation}
\varphi_{k}=T_{c}[x^{k}]\text{\quad when }k\in\mathbb{N}_{0}\text{ is even}
\label{Tc xk}%
\end{equation}
and
\begin{equation}
\varphi_{k}=T_{s}[x^{k}]\text{\quad when }k\in\mathbb{N}\text{ is odd.}
\label{Ts xk}%
\end{equation}

\end{theorem}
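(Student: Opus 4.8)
The plan is to mirror the derivation that produced Theorem~\ref{Th Transmutation of Powers K}, replacing the single relation (\ref{e0=Te}) for $T$ by the two representations (\ref{c cos}) and (\ref{s sin}) for $T_c$ and $T_s$. The essential idea is again to expand both sides of these representations as power series in $\omega$ and then match coefficients, using the fact that the relevant solutions are uniquely determined by their initial data.

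First I would identify the solutions $c(\omega,x;h)$ and $s(\omega,x;\infty)$ with the series $u_1$ and $u_2$ of (\ref{u1u2}). Setting $\lambda=-\omega^{2}$ in Theorem~\ref{ThGenSolSturmLiouville} and recalling the normalization $f(0)=1$, $f'(0)=h$, Remark~\ref{RemInitialValues} shows that $u_1$ satisfies $u_1(0)=1$, $u_1'(0)=h$, while $u_2$ satisfies $u_2(0)=0$, $u_2'(0)=1$. These are precisely the initial conditions defining $c(\omega,\cdot;h)$ and $s(\omega,\cdot;\infty)$, so by uniqueness of the initial value problem for (\ref{SLomega2}) we obtain
\[
c(\omega,x;h)=\sum_{k=0}^{\infty}\frac{(-1)^{k}\omega^{2k}}{(2k)!}\varphi_{2k}(x), \qquad s(\omega,x;\infty)=\sum_{k=0}^{\infty}\frac{(-1)^{k}\omega^{2k}}{(2k+1)!}\varphi_{2k+1}(x).
\]

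Next I would insert the Maclaurin expansions $\cos\omega x=\sum_{k=0}^{\infty}\frac{(-1)^{k}\omega^{2k}}{(2k)!}x^{2k}$ and $\frac{\sin\omega x}{\omega}=\sum_{k=0}^{\infty}\frac{(-1)^{k}\omega^{2k}}{(2k+1)!}x^{2k+1}$ into (\ref{c cos}) and (\ref{s sin}) and apply $T_c$ and $T_s$ term by term. Comparing with the series just obtained yields
\[
\sum_{k=0}^{\infty}\frac{(-1)^{k}\omega^{2k}}{(2k)!}\varphi_{2k}=\sum_{k=0}^{\infty}\frac{(-1)^{k}\omega^{2k}}{(2k)!}T_c[x^{2k}]
\]
and
\[
\sum_{k=0}^{\infty}\frac{(-1)^{k}\omega^{2k}}{(2k+1)!}\varphi_{2k+1}=\sum_{k=0}^{\infty}\frac{(-1)^{k}\omega^{2k}}{(2k+1)!}T_s[x^{2k+1}].
\]
Since both identities hold for every complex $\omega$, equating the coefficients of $\omega^{2k}$ gives $\varphi_{2k}=T_c[x^{2k}]$ and $\varphi_{2k+1}=T_s[x^{2k+1}]$, which are precisely (\ref{Tc xk}) and (\ref{Ts xk}).

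The step demanding the most care is the term-by-term application of $T_c$ and $T_s$, that is, the interchange of the operators with the infinite sums. This is justified exactly as in Theorem~\ref{Th Transmutation of Powers K}: the Maclaurin series of $\cos\omega x$ and $\frac{\sin\omega x}{\omega}$ converge uniformly on $[-a,a]$, and the kernels $K(x,t;h)$ and $K(x,t;\infty)$ are continuous, being formed by continuous operations from the continuously differentiable kernel $K(x,t)$ available for continuous $q$. Hence integration against these kernels commutes with the uniformly convergent sums, and the appeal to uniqueness of the initial value problem for (\ref{SLomega2}) is likewise valid since $q$ is continuous.
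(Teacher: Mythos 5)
Your proposal is correct and follows essentially the same route as the paper: the paper's proof likewise identifies $c(\omega,x;h)=u_{1}$ and $s(\omega,x;\infty)=u_{2}$ via their initial values and then expands $\cos\omega x$ and $\sin\omega x$ into Taylor series in (\ref{c cos}) and (\ref{s sin}) to match coefficients. You merely spell out the details (the substitution $\lambda=-\omega^{2}$, the uniqueness of the initial value problem, and the justification of the term-by-term interchange) that the paper leaves implicit.
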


\begin{proof}
It is easy to see that $c(\omega,x;h)=u_{1}(x)$ and $s(\omega,x;\infty
)=u_{2}(x)$ where $u_{1}$ and $u_{2}$ are defined by (\ref{u1u2}). From here
and from (\ref{c cos}), (\ref{s sin}) by expanding $\cos\omega x$ and
$\sin\omega x$ into their Taylor series we obtain (\ref{Tc xk}) and
(\ref{Ts xk}).
\end{proof}

Now, for a given nonvanishing solution of (\ref{SLhom}) on $(-a,a)$ satisfying
the initial conditions $f(0)=1$ and $f^{\prime}(0)=h$ where $h\in\mathbb{C}$
and for the corresponding system of functions (\ref{phik}) we construct a
transmutation operator for the pair $\frac{d^{2}}{dx^{2}}$ and $\frac{d^{2}%
}{dx^{2}}-q(x)$ such that $x^{k}$ are transformed into $\varphi_{k}(x)$ on the
whole segment $[-a,a]$ for any $k\in\mathbb{N}_{0}$. For this we introduce the
following projectors acting on any continuous function (defined on $[-a,a]$)
according to the rules $P_{e}f(x)=(f(x)+f(-x))/2$ and $P_{o}%
f(x)=(f(x)-f(-x))/2$. Consider the following operator
\[
\mathbf{T}=T_{c}P_{e}+T_{s}P_{o}.
\]
It is easy to see that by construction\ for an even $k$ we obtain
$\mathbf{T}[x^{k}]=T_{c}P_{e}[x^{k}]=T_{c}[x^{k}]=\varphi_{k}$ and analogously
for an odd $k$, $\mathbf{T}[x^{k}]=\varphi_{k}$ due to (\ref{Ts xk}). Thus,
$\varphi_{k}=\mathbf{T}[x^{k}]$ for any $k\in\mathbb{N}_{0}$. Moreover, the
operator $\mathbf{T}$ can be written as a Volterra operator in a form similar
to (\ref{T}). We have
\begin{equation}
\mathbf{T}u(x)=u(x)+\int_{-x}^{x}\mathbf{K}(x,t;h)u(t)dt \label{Tmain}%
\end{equation}
where
\begin{equation}
\mathbf{K}(x,t;h)=\frac{h}{2}+K(x,t)+\frac{h}{2}\int_{t}^{x}\left(
K(x,s)-K(x,-s)\right)  ds. \label{Kmain}%
\end{equation}
Let us notice that $\mathbf{K}(x,t;0)=K(x,t)$ and that the expression
\[
\mathbf{K}(x,t;h)-\mathbf{K}(x,-t;h)=K(x,t)-K(x,-t)+\frac{h}{2}\int_{-t}%
^{t}\left(  K(x,s)-K(x,-s)\right)  ds=K(x,t)-K(x,-t)
\]
does not depend on $h$. Thus, we obtain a way to compute $\mathbf{K}(x,t;h)$
for any $h$ by a given $\mathbf{K}(x,t;h_{1})$ for some particular value
$h_{1}$.

\begin{theorem}
The integral kernels $\mathbf{K}(x,t;h)$ and $\mathbf{K}(x,t;h_{1})$ are
related by the expression
\begin{equation}
\mathbf{K}(x,t;h)=\frac{h-h_{1}}{2}+\mathbf{K}(x,t;h_{1})+\frac{h-h_{1}}%
{2}\int_{t}^{x}\left(  \mathbf{K}(x,s;h_{1})-\mathbf{K}(x,-s;h_{1})\right)
ds. \label{KmainChangeOfH}%
\end{equation}

\end{theorem}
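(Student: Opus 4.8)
The plan is to verify the identity by a direct computation, relying on the explicit definition (\ref{Kmain}) together with the $h$-independence observation recorded immediately before the statement. First I would write out the defining formula (\ref{Kmain}) for the two parameter values $h$ and $h_{1}$:
\[
\mathbf{K}(x,t;h)=\frac{h}{2}+K(x,t)+\frac{h}{2}\int_{t}^{x}\left(K(x,s)-K(x,-s)\right)ds,
\]
\[
\mathbf{K}(x,t;h_{1})=\frac{h_{1}}{2}+K(x,t)+\frac{h_{1}}{2}\int_{t}^{x}\left(K(x,s)-K(x,-s)\right)ds.
\]
Subtracting the second from the first cancels the common term $K(x,t)$ and yields
\[
\mathbf{K}(x,t;h)-\mathbf{K}(x,t;h_{1})=\frac{h-h_{1}}{2}+\frac{h-h_{1}}{2}\int_{t}^{x}\left(K(x,s)-K(x,-s)\right)ds.
\]

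The remaining step is to rewrite the integrand $K(x,s)-K(x,-s)$ in terms of the kernel $\mathbf{K}(\cdot;h_{1})$ rather than the base kernel $K$. Here I would invoke the identity established just above the statement, namely that $\mathbf{K}(x,t;h)-\mathbf{K}(x,-t;h)=K(x,t)-K(x,-t)$ for every $h$; applying it with parameter $h_{1}$ gives exactly
\[
\mathbf{K}(x,s;h_{1})-\mathbf{K}(x,-s;h_{1})=K(x,s)-K(x,-s).
\]
Substituting this into the integral and moving $\mathbf{K}(x,t;h_{1})$ to the right-hand side produces precisely (\ref{KmainChangeOfH}).

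The argument is essentially a routine algebraic manipulation, so I do not anticipate a genuine obstacle. The one point requiring attention is recognizing that the integrand must be re-expressed through $\mathbf{K}(\cdot;h_{1})$ instead of being left in terms of $K=\mathbf{K}(\cdot;0)$; this is exactly what the $h$-independence of $\mathbf{K}(x,t;h)-\mathbf{K}(x,-t;h)$ makes possible, and it is the reason the formula expresses $\mathbf{K}(x,t;h)$ self-referentially in terms of $\mathbf{K}(x,t;h_{1})$ alone, with no explicit appeal to the base kernel $K$.
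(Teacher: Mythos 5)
Your proof is correct and follows exactly the route the paper intends: the theorem is stated as an immediate consequence of the definition (\ref{Kmain}) together with the observation, recorded just before the statement, that $\mathbf{K}(x,t;h)-\mathbf{K}(x,-t;h)=K(x,t)-K(x,-t)$ is independent of $h$, which is precisely the substitution you perform. Your write-up in fact makes explicit the short computation the paper leaves to the reader, with no deviation in method.
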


Let us prove that $\mathbf{T}$ is indeed a transmutation.

\begin{theorem}
\label{Th Transmute}Under the conditions of Theorem
\ref{Th Transmutation of Powers K} let us assume additionally that $q\in
C^{1}[-a,a]$. Then the operator (\ref{Tmain}) with the kernel defined by
(\ref{Kmain}) transforms $x^{k}$ into $\varphi_{k}(x)$ for any $k\in
\mathbb{N}_{0}$ and
\begin{equation}
\left(  -\frac{d^{2}}{dx^{2}}+q(x)\right)  \mathbf{T}[u]=\mathbf{T}\left[
-\frac{d^{2}}{dx^{2}}(u)\right]  \label{TransmutC2}%
\end{equation}
for any $u\in C^{2}[-a,a]$.
\end{theorem}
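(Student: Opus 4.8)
The plan is to prove the theorem in two stages, corresponding to its two assertions. The first assertion—that $\mathbf{T}[x^k]=\varphi_k$ for every $k\in\mathbb{N}_0$—has essentially already been established in the discussion preceding the statement: by construction $\mathbf{T}=T_cP_e+T_sP_o$, and since $x^k$ is even or odd according to the parity of $k$, the projectors $P_e,P_o$ isolate the correct component so that $\mathbf{T}[x^k]=T_c[x^k]=\varphi_k$ for even $k$ and $\mathbf{T}[x^k]=T_s[x^k]=\varphi_k$ for odd $k$, invoking Theorem \ref{Th Transmutation of Powers Tc and Ts}. So this part requires only a clean restatement.

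The substantial content is the operator identity \eqref{TransmutC2} on all of $C^2[-a,a]$. My approach would be to first verify it on the dense subset of polynomials and then extend by continuity and density. For a monomial $u=x^k$ with $k\ge 2$ we have $-\frac{d^2}{dx^2}(x^k)=-k(k-1)x^{k-2}$, so the right-hand side is $-k(k-1)\mathbf{T}[x^{k-2}]=-k(k-1)\varphi_{k-2}$, while the left-hand side is $\bigl(-\frac{d^2}{dx^2}+q\bigr)\varphi_k$; thus the identity on monomials reduces precisely to the $L$-basis defining relation $L\varphi_k=k(k-1)\varphi_{k-2}$ where $L=-\frac{d^2}{dx^2}+q$. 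I would confirm this relation by differentiating the recursive integrals \eqref{X1}–\eqref{X3} directly: one checks that $\varphi_k$ satisfies $\varphi_k''-q\varphi_k=k(k-1)\varphi_{k-2}$ by using $f''=qf$ together with the first-order ODEs that $X^{(n)}$ and $\widetilde X^{(n)}$ satisfy. The cases $k=0,1$ (where $L\varphi_k=0$) are handled separately and trivially. This verifies \eqref{TransmutC2} on the linear span of all monomials.

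To pass from polynomials to arbitrary $u\in C^2[-a,a]$, the plan is to exploit continuity of both sides in the $C^2$ topology. The right-hand side $\mathbf{T}[-u'']$ is continuous in $u''$ (hence in $u$ in the $C^2$ norm) because $\mathbf{T}$ is a bounded integral operator with continuous kernel. For the left-hand side, I would first establish that $\mathbf{T}$ maps $C^2$ into $C^2$ and that $\frac{d^2}{dx^2}\mathbf{T}[u]$ depends continuously on $u$; this is where the extra hypothesis $q\in C^1[-a,a]$ enters, since it guarantees (via the Goursat problem \eqref{Goursat1}–\eqref{Goursat2}) that the kernel $K(x,t)$, and therefore $\mathbf{K}(x,t;h)$ from \eqref{Kmain}, is sufficiently smooth ($C^2$ in its arguments) for the differentiations under the integral sign to be legitimate and to produce continuous expressions. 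Given a Weierstrass-type approximation of $u$ together with its first two derivatives by polynomials $p_n$ (which is possible on a compact interval for $u\in C^2$), both sides evaluated at $p_n$ agree, and letting $n\to\infty$ yields \eqref{TransmutC2} for $u$.

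The main obstacle I anticipate is the regularity bookkeeping needed to differentiate $\mathbf{T}[u]$ twice and justify the limiting argument. Concretely, computing $\frac{d^2}{dx^2}\int_{-x}^x \mathbf{K}(x,t;h)u(t)\,dt$ produces boundary terms from the variable limits (evaluations of $\mathbf{K}$ and its $x$-derivative at $t=\pm x$) plus an integral of $\partial_x^2\mathbf{K}\cdot u$; one must check these boundary contributions assemble correctly using the Goursat conditions \eqref{Goursat2} and the specific form \eqref{Kmain}. Establishing that these manipulations are valid and that the resulting operator $\frac{d^2}{dx^2}\mathbf{T}[\cdot]$ is genuinely continuous on $C^2$—rather than merely formally defined—is the delicate step, and it is exactly what the assumption $q\in C^1$ is designed to secure. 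Once continuity is in hand, the density argument closes the proof with no further difficulty.
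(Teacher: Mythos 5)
Your proposal is correct and follows essentially the same route as the paper's own proof: verify \eqref{TransmutC2} on monomials via the $L$-basis relation $\left(-\frac{d^{2}}{dx^{2}}+q\right)\varphi_{k}=k(k-1)\varphi_{k-2}$ (which the paper simply cites from \cite{KrCV08} rather than re-deriving from \eqref{X1}--\eqref{X3}), then extend by density, using $q\in C^{1}$ to get a $C^{2}$ kernel and hence convergence of $\mathbf{T}[P_{n}]$ to $\mathbf{T}[u]$ together with two derivatives. The one step you assert without construction --- simultaneous polynomial approximation of $u$, $u'$, $u''$ --- is handled in the paper exactly as you anticipate, by taking a Weierstrass approximation $Q_{n}$ of $u''$ and setting $P_{n}(x)=u(0)+u'(0)x+\int_{0}^{x}\int_{0}^{t}Q_{n}(s)\,ds\,dt$.
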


\begin{proof}
Under the condition $q\in C^{1}[-a,a]$, the kernel $K(x,t)$ in \eqref{T} is
twice continuously differentiable with respect to both $x$ and $t$
\cite[Theorem 1.2.2]{Marchenko}. Hence, the kernel $\mathbf{K}(x,t;h)$ is also
twice continuously differentiable with respect to both $x$ and $t$, and
$C^{2}[-a,a]$ is invariant under the operator $\mathbf{T}$, that is, the
left-hand side of \eqref{TransmutC2} is well defined for all $u\in
C^{2}[-a,a]$.

Since $\left(  -\frac{d^{2}}{dx^{2}}+q(x)\right)  \varphi_{k}=k(k-1)\varphi
_{k-2},\ k\geq2$, and $\left(  -\frac{d^{2}}{dx^{2}}+q(x)\right)  \varphi
_{k}=0,\ k=0,1$ (see \cite{KrCV08}), the equality \eqref{TransmutC2} is valid
for all powers $x^{k}$ and, by linearity, for all polynomials. Let $u\in
C^{2}[-a,a]$. Then, $u^{\prime\prime}\in C[-a,a]$ and by the Weierstrass
theorem there exists a sequence of polynomials $Q_{n}$ such that
\begin{equation}
Q_{n}\rightarrow u^{\prime\prime},\ n\rightarrow\infty\text{ uniformly on
}[-a,a].\label{2derivconv}%
\end{equation}
Integrating \eqref{2derivconv} twice we conclude that the sequence of
polynomials defined by
\[
P_{n}(x)=u(0)+u^{\prime}(0)x+\int_{0}^{x}\int_{0}^{t}Q_{n}(s)\,ds\,dt
\]
is such that%
\[
P_{n}\rightarrow u,\quad P_{n}^{\prime}\rightarrow u^{\prime}\ \text{ and
}\ P_{n}^{\prime\prime}\rightarrow u^{\prime\prime},\ n\rightarrow\infty
\]
uniformly in $[-a,a]$. Since the kernel $\mathbf{K}(x,t;h)$ is twice
continuously differentiable, it is easy to see that also
\[
\mathbf{T}[P_{n}]\rightarrow\mathbf{T}[u],\quad\bigl(\mathbf{T}[P_{n}%
]\bigr)^{\prime}\rightarrow\bigl(\mathbf{T}[u]\bigr)^{\prime}\text{ and
}\bigl(\mathbf{T}[P_{n}]\bigr)^{\prime\prime}\rightarrow\bigl(\mathbf{T}%
[u]\bigr)^{\prime\prime},\ n\rightarrow\infty
\]
uniformly in $[-a,a]$. Therefore,
\[
\left(  -\frac{d^{2}}{dx^{2}}+q(x)\right)  \mathbf{T}[u]=\lim_{n\rightarrow
\infty}\left(  -\frac{d^{2}}{dx^{2}}+q(x)\right)  \mathbf{T}[P_{n}%
]=\lim_{n\rightarrow\infty}\mathbf{T}\left[  -\frac{d^{2}}{dx^{2}}%
(P_{n})\right]  =\mathbf{T}\left[  -\frac{d^{2}}{dx^{2}}(u)\right]  .
\]

\end{proof}

It is possible to give another final part of the proof, revealing some
properties of the adjoint operator $\mathbf{T}^{\ast}$.

\begin{proof}
Consider $u\in C^{2}[-a,a]$ and let $\{p_{n}\}_{n\in\mathbb{N}}$ be a sequence
of polynomials such that $p_{n}\rightarrow u,\ n\rightarrow\infty$. Consider
$f_{n}=\left(  -\frac{d^{2}}{dx^{2}}+q(x)\right)  \mathbf{T}[p_{n}]$. Since
$p_{n}$ is a polynomial, we also have $f_{n}=\mathbf{T}\left[  -\frac{d^{2}%
}{dx^{2}}(p_{n})\right]  $. Let $f=\left(  -\frac{d^{2}}{dx^{2}}+q(x)\right)
\mathbf{T}[u]$, $\tilde{f}=\mathbf{T}\left[  -\frac{d^{2}}{dx^{2}}(u)\right]
$. From now on consider $u$, $p_{n}$, $f_{n}$, $f$, $\tilde{f}$ as elements of
the Hilbert space $L_{2}[-a,a]$. Since $\mathbf{T}$ is the Volterra operator,
it is continuous in $L_{2}[-a,a]$ space. Hence, $\mathbf{T}p_{n}%
\rightarrow\mathbf{T}u,\ n\rightarrow\infty$. Consider any function $\psi\in
C_{0}^{2}[-a,a]$, that is twice continuously differentiable and supported on
some $[\alpha,\beta]\subset(-a,a)$. Then we have
\begin{equation}
(f_{n}-f,\psi)=\left(  \left(  -\frac{d^{2}}{dx^{2}}+q(x)\right)
(\mathbf{T}p_{n}-\mathbf{T}u),\psi\right)  =\left(  \mathbf{T}p_{n}%
-\mathbf{T}u,\left(  -\frac{d^{2}}{dx^{2}}+\overline{q(x)}\right)
\psi\right)  \rightarrow0,\ n\rightarrow\infty. \label{TransmutC2pr1}%
\end{equation}

For the right-hand side of \eqref{TransmutC2}, we need the adjoint operator
$\mathbf{T}^{\ast}$. Since $\mathbf{T}$ is the Volterra operator, its adjoint
is given by the expression \cite[Chapter 3, Example 3.17]{Kato} $\mathbf{T}%
^{\ast}u(x)=u(x)+\int_{-a}^{-|x|}\overline{\mathbf{K}(t,x;h)}u(t)dt+\int
_{|x|}^{a}\overline{\mathbf{K}(t,x;h)}u(t)dt$, from which it is easy to see
that $\mathbf{T}^{\ast}\psi\in C_{0}^{2}[-a,a]$ for any $\psi\in C_{0}%
^{2}[-a,a]$. Hence,
\begin{equation}
\bigl(f_{n}-\tilde{f},\psi\bigr)=\left(  -\frac{d^{2}}{dx^{2}}(p_{n}%
-u),\mathbf{T}^{\ast}\psi\right)  =\left(  p_{n}-u,-\frac{d^{2}}{dx^{2}%
}\left(  \mathbf{T}^{\ast}\psi\right)  \right)  \rightarrow0,\ n\rightarrow
\infty. \label{TransmutC2pr2}%
\end{equation}

It follows from \eqref{TransmutC2pr1} and \eqref{TransmutC2pr2} that
$(f-\tilde{f},\psi)=0$ for any $\psi\in C_{0}^{2}[-a,a]$. Since the set
$C_{0}^{2}[-a,a]$ is dense in $L_{2}[-a,a]$, we have $f=\tilde{f}$ as elements
of $L_{2}[-a,a]$ as well as continuous functions.
\end{proof}

\begin{example}
\label{ExampleTransmut2} Consider the same operators $A$ and $B$ as in Example
\ref{ExampleTransmut}. Then the transmutation operator $\mathbf{T}$ is defined
by the integration kernel \eqref{Kmain}
\[
\mathbf{K}(x,t;h)=-\frac{1}{2}\frac{\sqrt{c(x^{2}-y^{2})}J_{1}\bigl(\sqrt
{c(x^{2}-y^{2})}\bigr)}{x-y}+\frac{h}{2}J_{0}\bigl(\sqrt{c(x^{2}-y^{2}%
)}\bigr)=K(x,t)+\frac{h}{2}J_{0}\bigl(\sqrt{c(x^{2}-y^{2})}\bigr).
\]

If we consider the function $f(x)=e^{i\kappa x},\ \kappa^{2}=c$ as a solution
of $Af=0$ satisfying $f(0)=1$, $f^{\prime}(0)=i\kappa$, then the first four
functions $\varphi_{k}$ are
\[
\varphi_{0}(x)=e^{i\kappa x},\quad\varphi_{1}(x)=\frac{\sin(\kappa x)}{\kappa
},\quad\varphi_{2}(x)=\frac{\kappa xe^{i\kappa x}-\sin(\kappa x)}{i\kappa^{2}%
},\quad\varphi_{3}(x)=\frac{3(\sin(\kappa x)-\kappa x\cos(\kappa x))}%
{\kappa^{3}}.
\]
and from Theorem \ref{Th Transmutation of Powers Tc and Ts} we obtain the
integrals
\[
\varphi_{k}(x)=x^{k}-\frac{\kappa}{2}\int_{-x}^{x}\left(  \frac{\sqrt
{(x^{2}-y^{2})}J_{1}\bigl(\kappa\sqrt{(x^{2}-y^{2})}\bigr)y^{k}}{x-y}%
+iy^{k}J_{0}\bigl(\kappa\sqrt{(x^{2}-y^{2})}\bigr)\right)  \,dy
\]
which validity can be checked numerically.
\end{example}

\section{Bicomplex numbers and pseudoanalytic functions}

Together with the imaginary unit $i$ we consider another imaginary unit $j$,
such that
\begin{equation}
j^{2}=i^{2}=-1\quad\text{and}\quad i\,j=j\,i. \label{laws}%
\end{equation}
We have then two copies of the algebra of complex numbers: $\mathbb{C}%
_{i}:=\left\{  a+ib,\quad\left\{  a,b\right\}  \subset\mathbb{R}\right\}  $
and $\mathbb{C}_{j}:=\left\{  a+jb,\quad\left\{  a,b\right\}  \subset
\mathbb{R}\right\}  $. The expressions of the form $w=u+jv$ where $\left\{
u,v\right\}  \subset\mathbb{C}_{i}$ are called bicomplex numbers. The
conjugation with respect to $j$ we denote as follows $\overline{w}=u-jv$. The
components $u$ and $v$ will be called the scalar and the vector part of $w$
respectively. We will use the notation $u=\operatorname{Sc}w$ and
$v=\operatorname{Vec}w$.

The set of all bicomplex numbers with a natural operation of addition and with
the multiplication defined by the laws (\ref{laws}) represents a commutative
ring with unit. We denote it by $\mathbb{B}$. It contains zero divisors: the
nonzero elements $w$ such that $w\overline{w}=0$. Introducing the pair of
idempotents $P^{+}=\frac{1}{2}(1+ij)$ and $P^{-}=\frac{1}{2}(1-ij)$ ($\left(
P^{\pm}\right)  ^{2}=P^{\pm}$) it is easy to see (e.g., \cite[p. 154]{APFT})
that $w=u+jv$ is a zero divisor if and only if $w=2P^{+}u$ or $w=2P^{-}u$. For
other algebraic properties of bicomplex numbers we refer to
\cite{RochonShapiro}, \cite{RochonTrembl}.

We consider $\mathbb{B}$-valued functions of two real variables $x$ and $y$.
Denote $\overline{\partial}=\frac{1}{2}(\frac{\partial}{\partial x}%
+j\frac{\partial}{\partial y})$ and $\partial=\frac{1}{2}(\frac{\partial
}{\partial x}-j\frac{\partial}{\partial y})$. An equation of the form
\begin{equation}
\overline{\partial}w=aw+b\overline{w},\label{Vekuabic}%
\end{equation}
where $w$, $a$ and $b$ are $\mathbb{B}$-valued functions is called a bicomplex
Vekua equation. When all the involved functions have their values in
$\mathbb{C}_{j}$ only, equation (\ref{Vekuabic}) becomes the well known
complex Vekua equation (see \cite{APFT}, \cite{Vekua}). We will assume that
$w\in C^{1}(\Omega)$ where $\Omega\subset\mathbb{R}^{2}$ is an open domain and
$a$, $b$ are H\"{o}lder continuous in $\Omega$.

When $a\equiv0$ and $b=\frac{\overline{\partial}\phi}{\phi}$ where
$\phi:\overline{\Omega}\rightarrow\mathbb{C}_{i}$ possesses H\"{o}lder
continuous partial derivatives in $\Omega$ and $\phi(x,y)\neq0$,
$\forall(x,y)\in\overline{\Omega}$ we will say that the bicomplex Vekua
equation
\begin{equation}
\overline{\partial}w=\frac{\overline{\partial}\phi}{\phi}\overline
{w}\label{Vekuamain}%
\end{equation}
is a Vekua equation of the main type or the main Vekua equation.

For classical complex Vekua equations Bers introduced \cite{Berskniga} the
notions of a generating pair, generating sequence, formal powers and Taylor
series in formal powers. As was shown in \cite{KrAntonio}, \cite{APFT} the
definition of these notions can be extended onto the bicomplex situation. Here
we briefly recall the main definitions.

\begin{definition}
A pair of $\mathbb{B}$-valued functions $F$ and $G$ possessing H\"{o}lder
continuous partial derivatives in $\Omega$ with respect to the real variables
$x$ and $y$ is said to be a generating pair if it satisfies the inequality%
\begin{equation}
\operatorname{Vec}(\overline{F}G)\neq0\qquad\text{in }\Omega.
\label{condGenPair}%
\end{equation}

\end{definition}

Condition (\ref{condGenPair}) implies that every bicomplex function $w$
defined in a subdomain of $\Omega$ admits the unique representation $w=\phi
F+\psi G$ where the functions $\phi$ and $\psi$ are scalar ($\mathbb{C}_{i}$-valued).

\begin{remark}
When $F\equiv1$ and $G\equiv j$ the corresponding bicomplex Vekua equation is
\begin{equation}
\overline{\partial}w=0, \label{C-Rbic}%
\end{equation}
and its study in fact reduces to the complex analytic function theory. This is
due to the fact that the functions $P^{+}w$ and $P^{-}w$ are necessarily
antiholomorphic and holomorphic respectively. Indeed, application of $P^{+}$
and $P^{-}$ to (\ref{C-Rbic}) gives us
\begin{equation}
\partial_{z}P^{+}w=0\quad\text{and}\quad\partial_{\overline{z}}P^{-}w=0
\label{dP}%
\end{equation}
where $\partial_{z}=\frac{1}{2}(\frac{\partial}{\partial x}-i\frac{\partial
}{\partial y})$ and $\partial_{\overline{z}}=\frac{1}{2}(\frac{\partial
}{\partial x}+i\frac{\partial}{\partial y})$. Moreover, $P^{+}w=P^{+}%
(u+jv)=P^{+}(u-iv)$ and $P^{-}w=P^{-}(u+iv)$. Due to (\ref{dP}) the scalar
functions $w^{+}:=u-iv$ and $w^{-}:=u+iv$ are antiholomorphic and holomorphic
respectively. We stress that $w^{+}$ is not necessarily a complex conjugate of
$w^{-}$ ($u$ and $v$ are $\mathbb{C}_{i}$-valued).
\end{remark}

In general a reduction of the bicomplex Vekua equation (\ref{Vekuabic}) to a
pair of decoupled complex Vekua equations is impossible. Application of
$P^{+}$ and $P^{-}$ to (\ref{Vekuabic}) reduces it to the following system of
equations
\[
\partial_{z}w^{+}=a^{+}w^{+}+b^{+}w^{-}%
\]
and
\[
\partial_{\overline{z}}w^{-}=a^{-}w^{-}+b^{-}w^{+}%
\]
for two complex functions $w^{+}$ and $w^{-}$ with complex coefficients
$a^{\pm}$, $b^{\pm}$.

Assume that $(F,G)$ is a generating pair in a domain $\Omega$.

\begin{definition}
Let the $\mathbb{B}$-valued function $w$ be defined in a neighborhood of
$z_{0}\in\Omega\subset\mathbb{C}_{j}$. In a complete analogy with the complex
case we say that at $z_{0}$ the function $w$ possesses the $(F,G)$-derivative%
\index{(F,G)-derivative}
$\overset{\cdot}{w}(z_{0})$ if the (finite) limit
\begin{equation}
\overset{\cdot}{w}(z_{0})=\lim_{z\rightarrow z_{0}}\frac{w(z)-\lambda
_{0}F(z)-\mu_{0}G(z)}{z-z_{0}} \label{derivative_def}%
\end{equation}
exists where $\lambda_{0}$ and $\mu_{0}$ are the unique scalar constants such
that $w(z_{0})=\lambda_{0}F(z_{0})+\mu_{0}G(z_{0})$.
\end{definition}

Similarly to the complex case (see, e.g., \cite[Chapter 2]{APFT}) it is easy
to show that if $\overset{\cdot}{w}(z_{0})$ exists then at $z_{0}$,
$\overline{\partial}w$ and $\partial w$ exist and equations
\begin{equation}
\overline{\partial}w=a_{(F,G)}w+b_{(F,G)}\overline{w} \label{Vekua_equation}%
\end{equation}
and
\begin{equation}
\overset{\cdot}{w}=\partial w-A_{(F,G)}w-B_{(F,G)}\overline{w}
\label{derivative_with_characteristic}%
\end{equation}
hold, where $a_{(F,G)}$, $b_{(F,G)}$, $A_{(F,G)}$ and $B_{(F,G)}$ are the
\emph{characteristic coefficients}%
\index{characteristic coefficients}
of the pair $(F,G)$ defined by the formulas
\[
a_{(F,G)}=-\frac{\overline{F}\,\overline{\partial}G-\overline{G}%
\,\overline{\partial}F}{F\overline{G}-\overline{F}G},\qquad b_{(F,G)}%
=\frac{F\,\overline{\partial}G-G\,\overline{\partial}F}{F\overline
{G}-\overline{F}G},
\]

\[
A_{(F,G)}=-\frac{\overline{F}\,\partial G-\overline{G}\,\partial F}%
{F\overline{G}-\overline{F}G},\qquad B_{(F,G)}=\frac{F\,\partial G-G\,\partial
F}{F\overline{G}-\overline{F}G}.
\]
Note that $F\overline{G}-\overline{F}G=-2j\operatorname{Vec}(\overline
{F}G)\neq0$.

If $\overline{\partial}w$ and $\partial w$ exist and are continuous in some
neighborhood of $z_{0}$, and if (\ref{Vekua_equation}) holds at $z_{0}$, then
$\overset{\cdot}{w}(z_{0})$ exists, and (\ref{derivative_with_characteristic})
holds. Let us notice that $F$ and $G$ possess $(F,G)$-derivatives,
$\overset{\cdot}{F}\equiv\overset{\cdot}{G}\equiv0$ and the following
equalities are valid which determine the characteristic coefficients uniquely%
\[
\overline{\partial}F=a_{(F,G)}F+b_{(F,G)}\overline{F},\quad\overline{\partial
}G=a_{(F,G)}G+b_{(F,G)}\overline{G},
\]%
\[
\partial F=A_{(F,G)}F+B_{(F,G)}\overline{F},\quad\partial G=A_{(F,G)}%
G+B_{(F,G)}\overline{G}.
\]
If the $(F,G)$-derivative of a $\mathbb{B}$-valued function $w=\phi F+\psi G$
(where the functions $\phi$ and $\psi$ are scalar) exists, besides the form
(\ref{derivative_with_characteristic}) it can also be written as follows
$\overset{\cdot}{w}=\partial\phi\,F+\partial\psi\,G$.

\begin{definition}
\label{DefSuccessor_bi}Let $(F,G)$ and $(F_{1},G_{1})$ -- be two generating
pairs in $\Omega$. $(F_{1},G_{1})$ is called \ successor of $(F,G)$ and
$(F,G)$ is called predecessor of $(F_{1},G_{1})$ if%
\[
a_{(F_{1},G_{1})}=a_{(F,G)}\qquad\text{and}\qquad b_{(F_{1},G_{1})}%
=-B_{(F,G)}\text{.}%
\]

\end{definition}

By analogy with the complex case we have the following statement.

\begin{theorem}
\label{ThBersDer_bi}Let $w$ be a bicomplex $(F,G)$-pseudoanalytic function and
let $(F_{1},G_{1})$ be a successor of $(F,G)$. Then $\overset{\cdot}{w}$ is a
bicomplex $(F_{1},G_{1})$-pseudoanalytic function.
\end{theorem}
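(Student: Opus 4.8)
The plan is to show directly that the $(F,G)$-derivative $\overset{\cdot}{w}$ satisfies the Vekua equation \eqref{Vekua_equation} with the characteristic coefficients of $(F_1,G_1)$, and then to invoke the converse stated just above the theorem (if $\overline{\partial}\overset{\cdot}{w}$ and $\partial\overset{\cdot}{w}$ exist and are continuous and \eqref{Vekua_equation} holds, then $\overset{\cdot}{w}$ is pseudoanalytic for the corresponding pair). Writing $a,b,A,B$ for the characteristic coefficients of $(F,G)$, I would use the two identities available for $w$, namely the Vekua equation $\overline{\partial}w=aw+b\overline{w}$ and the derivative formula $\overset{\cdot}{w}=\partial w-Aw-B\overline{w}$, the latter rewritten as $\partial w=\overset{\cdot}{w}+Aw+B\overline{w}$.

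First I would compute $\overline{\partial}\overset{\cdot}{w}$. The crucial observation is that $\overline{\partial}$ and $\partial$ commute on $\mathbb{B}$-valued $C^{2}$ functions, since $\overline{\partial}\partial=\partial\overline{\partial}=\tfrac14(\partial_{xx}+\partial_{yy})$ (using $j^{2}=-1$ and the commutativity of the real mixed partials). Hence $\overline{\partial}\partial w=\partial\overline{\partial}w=\partial(aw+b\overline{w})$, so $\overline{\partial}\overset{\cdot}{w}=\partial(aw+b\overline{w})-\overline{\partial}(Aw)-\overline{\partial}(B\overline{w})$. Expanding by the product rule and substituting $\partial w=\overset{\cdot}{w}+Aw+B\overline{w}$ together with the conjugate identities $\partial\overline{w}=\overline{\overline{\partial}w}=\overline{a}\,\overline{w}+\overline{b}w$ and $\overline{\partial}\overline{w}=\overline{\partial w}=\overline{\overset{\cdot}{w}}+\overline{A}\,\overline{w}+\overline{B}w$, I would collect the outcome into the form $\overline{\partial}\overset{\cdot}{w}=a\overset{\cdot}{w}-B\overline{\overset{\cdot}{w}}+c_{1}w+c_{2}\overline{w}$, where $c_{1},c_{2}$ are explicit bicomplex combinations of $a,b,A,B$ and their first derivatives. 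By the successor relations $a_{(F_{1},G_{1})}=a$ and $b_{(F_{1},G_{1})}=-B$, the first two terms are exactly $a_{(F_{1},G_{1})}\overset{\cdot}{w}+b_{(F_{1},G_{1})}\overline{\overset{\cdot}{w}}$, so the proof reduces to showing $c_{1}=c_{2}=0$.

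To verify $c_{1}=c_{2}=0$ I would exploit the defining relations $\overline{\partial}F=aF+b\overline{F}$, $\partial F=AF+B\overline{F}$ and the same for $G$. Applying $\overline{\partial}\partial=\partial\overline{\partial}$ to $F$ (using $\overset{\cdot}{F}\equiv0$) and expanding both sides in the frame $\{F,\overline{F}\}$ produces precisely $c_{1}F+c_{2}\overline{F}=0$ with the same coefficients $c_{1},c_{2}$; the identical computation for $G$ gives $c_{1}G+c_{2}\overline{G}=0$. These two relations form a linear system whose determinant is $F\overline{G}-\overline{F}G=-2j\operatorname{Vec}(\overline{F}G)$, which is a non-zero-divisor (hence invertible in $\mathbb{B}$) by the generating-pair condition \eqref{condGenPair}; therefore $c_{1}=c_{2}=0$ identically. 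Substituting back yields $\overline{\partial}\overset{\cdot}{w}=a_{(F_{1},G_{1})}\overset{\cdot}{w}+b_{(F_{1},G_{1})}\overline{\overset{\cdot}{w}}$, which is the claim.

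The routine but bookkeeping-heavy part is the product-rule expansion and the careful grouping of terms. The two genuinely delicate points are the following. First, differentiating the coefficients $a,b,A,B$ requires enough regularity of the generating pair so that these coefficients are themselves $C^{1}$; I would state this smoothness hypothesis explicitly, exactly as in the complex theory of \cite{APFT}. Second, deducing $c_{1}=c_{2}=0$ must rely on the bicomplex linear-algebra fact that a system whose determinant is a non-zero-divisor admits only the trivial solution, which is where the invertibility of $F\overline{G}-\overline{F}G$ — guaranteed by $\operatorname{Vec}(\overline{F}G)\neq0$ — is indispensable. This is the one step where the bicomplex setting could a priori diverge from the complex case, and it is precisely what the generating-pair condition rules out.
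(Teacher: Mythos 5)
The paper never actually proves Theorem~\ref{ThBersDer_bi}: it is introduced with ``by analogy with the complex case'' and the argument is deferred to the complex theory of \cite{Berskniga}, \cite{KrAntonio}, \cite{APFT}. Your proposal therefore supplies a self-contained computation where the paper offers only a citation, and the computation is correct. Carrying out the expansion you describe gives
\[
\overline{\partial}\overset{\cdot}{w}=a\overset{\cdot}{w}-B\,\overline{\overset{\cdot}{w}}+c_{1}w+c_{2}\overline{w},\qquad
c_{1}=\partial a-\overline{\partial}A+b\overline{b}-B\overline{B},\quad
c_{2}=\partial b-\overline{\partial}B+aB+b\overline{a}-Ab-B\overline{A},
\]
and applying $\overline{\partial}\partial=\partial\overline{\partial}$ to $F$ and to $G$ (using $\overset{\cdot}{F}=\overset{\cdot}{G}=0$, i.e.\ $\partial F=AF+B\overline{F}$, $\overline{\partial}F=aF+b\overline{F}$, etc.) does reproduce exactly $c_{1}F+c_{2}\overline{F}=0$ and $c_{1}G+c_{2}\overline{G}=0$ with the same $c_{1},c_{2}$, as you claim. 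You also correctly identify the two bicomplex-specific danger points: since $\mathbb{B}$ has zero divisors, nonvanishing of the determinant $F\overline{G}-\overline{F}G$ would not by itself force $c_{1}=c_{2}=0$, but $F\overline{G}-\overline{F}G=-2j\operatorname{Vec}(\overline{F}G)$ with $\operatorname{Vec}(\overline{F}G)$ a nonvanishing $\mathbb{C}_{i}$-valued function, hence invertible in $\mathbb{B}$, and the adjugate argument in a commutative ring then applies; and $\overline{\partial}$, $\partial$ do commute on $C^{2}$ functions. The one substantive caveat is regularity, which you flag but which deserves emphasis: your route needs second derivatives not only of $F$ and $G$ (so that $a,b,A,B\in C^{1}$) but also of $w$ itself, in order to write $\overline{\partial}\partial w=\partial\overline{\partial}w$ — you should either assume $w\in C^{2}$ explicitly or derive it by elliptic regularity from $C^{1,\alpha}$ coefficients via the $P^{\pm}$-decoupled system. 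The paper's standing hypotheses are only H\"{o}lder continuous first derivatives, and in the complex theory the successor theorem is proved at that minimal regularity using integral-operator and similarity-principle machinery that, as the paper itself stresses, is unavailable in the bicomplex setting; so your added smoothness hypothesis is not cosmetic — with it your proof is complete, while at the paper's stated level of regularity the theorem still rests on the cited analogy.
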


\begin{definition}
\label{DefAdjoint_bi}Let $(F,G)$ be a generating pair. Its adjoint generating
pair $(F,G)^{\ast}=(F^{\ast},G^{\ast})$ is defined by the formulas%
\[
F^{\ast}=-\frac{2\overline{F}}{F\overline{G}-\overline{F}G},\qquad G^{\ast
}=\frac{2\overline{G}}{F\overline{G}-\overline{F}G}.
\]

\end{definition}

The $(F,G)$-integral is defined as follows
\[
\int_{\Gamma}Wd_{(F,G)}z=\frac{1}{2}\left(  F(z_{1})\operatorname{Sc}%
\int_{\Gamma}G^{\ast}Wdz+G(z_{1})\operatorname{Sc}\int_{\Gamma}F^{\ast
}Wdz\right)
\]
where $\Gamma$ is a rectifiable curve leading from $z_{0}$ to $z_{1}$.

If $W=\phi F+\psi G$ is a bicomplex $(F,G)$-pseudoanalytic function where
$\phi$ and $\psi$ are complex valued functions then
\begin{equation}
\int_{z_{0}}^{z}\overset{\cdot}{W}d_{(F,G)}z=W(z)-\phi(z_{0})F(z)-\psi
(z_{0})G(z), \label{FGAntD}%
\end{equation}
and as $\overset{\cdot}{F}=\overset{}{\overset{\cdot}{G}=}0$, this integral is
path-independent and represents the $(F,G)$-antiderivative of $\overset{\cdot
}{W}$.

\begin{definition}
\label{DefSeq_bi}A sequence of generating pairs $\left\{  (F_{m}%
,G_{m})\right\}  $, $m=0,\pm1,\pm2,\ldots$ , is called a generating sequence
if $(F_{m+1},G_{m+1})$ is a successor of $(F_{m},G_{m})$. If $(F_{0}%
,G_{0})=(F,G)$, we say that $(F,G)$ is embedded in $\left\{  (F_{m}%
,G_{m})\right\}  $.
\end{definition}

Let $W$ be a bicomplex $(F,G)$-pseudoanalytic function. Using a generating
sequence in which $(F,G)$ is embedded we can define the higher derivatives of
$W$ by the recursion formula%
\[
W^{[0]}=W;\qquad W^{[m+1]}=\frac{d_{(F_{m},G_{m})}W^{[m]}}{dz},\quad
m=1,2,\ldots\text{.}%
\]

\begin{definition}
\label{DefFormalPower_bi}The formal power $Z_{m}^{(0)}(a,z_{0};z)$ with center
at $z_{0}\in\Omega$, coefficient $a$ and exponent $0$ is defined as the linear
combination of the generators $F_{m}$, $G_{m}$ with scalar constant
coefficients $\lambda$, $\mu$ chosen so that $\lambda F_{m}(z_{0})+\mu
G_{m}(z_{0})=a$. The formal powers with exponents $n=0,1,2,\ldots$ are defined
by the recursion formula%
\begin{equation}
Z_{m}^{(n+1)}(a,z_{0};z)=(n+1)\int_{z_{0}}^{z}Z_{m+1}^{(n)}(a,z_{0}%
;\zeta)d_{(F_{m},G_{m})}\zeta. \label{recformulaD}%
\end{equation}

\end{definition}

This definition implies the following properties.

\begin{enumerate}
\item $Z_{m}^{(n)}(a,z_{0};z)$ is an $(F_{m},G_{m})$-pseudoanalytic function
of $z$.

\item If $a^{\prime}$ and $a^{\prime\prime}$ are scalar constants, then
\[
Z_{m}^{(n)}(a^{\prime}+ja^{\prime\prime},z_{0};z)=a^{\prime}Z_{m}%
^{(n)}(1,z_{0};z)+a^{\prime\prime}Z_{m}^{(n)}(j,z_{0};z).
\]

\item The formal powers satisfy the differential relations%
\[
\frac{d_{(F_{m},G_{m})}Z_{m}^{(n)}(a,z_{0};z)}{dz}=nZ_{m+1}^{(n-1)}%
(a,z_{0};z).
\]

\item The asymptotic formulas
\[
Z_{m}^{(n)}(a,z_{0};z)\sim a(z-z_{0})^{n},\quad z\rightarrow z_{0}%
\]
hold.
\end{enumerate}

The case of the main bicomplex Vekua equation is of a special interest due to
the following relation with the stationary Schr\"{o}dinger equation.

\begin{theorem}
\cite{KrJPhys06} Let $W=W_{1}+jW_{2}$ be a solution of the main bicomplex
Vekua equation
\begin{equation}
\overline{\partial}W=\frac{\overline{\partial}\phi}{\phi}\overline{W}%
\quad\text{in }\Omega\label{mainV}%
\end{equation}
where $W_{1}=\operatorname{Sc}W$, $W_{2}=\operatorname{Vec}W$ and the
$\mathbb{C}_{i}$-valued function $\phi$ is a nonvanishing solution of the
equation
\begin{equation}
-\Delta u+q_{1}(x,y)u=0\quad\text{in }\Omega\label{eqq1}%
\end{equation}
where $q_{1}$ is a continuous $\mathbb{C}_{i}$-valued function. Then $W_{1}$
is a solution of (\ref{eqq1}) in $\Omega$ and $W_{2}$ is a solution of the
associated Schr\"{o}dinger equation
\begin{equation}
-\Delta v+q_{2}(x,y)v=0\quad\text{in }\Omega\label{eqq2}%
\end{equation}
where $q_{2}=\frac{1}{8}\frac{\overline{\partial}\phi\,\partial\phi}{\phi^{2}%
}-q_{1}$.
\end{theorem}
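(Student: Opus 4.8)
The plan is to establish the two assertions of the theorem by a direct computation using the main Vekua equation together with the definitions of the operators $\overline{\partial}$ and $\partial$. First I would write $W = W_1 + jW_2$ and expand the right-hand side of \eqref{mainV}, noting that $\overline{W} = W_1 - jW_2$, so that the equation $\overline{\partial}W = \frac{\overline{\partial}\phi}{\phi}\overline{W}$ becomes a pair of scalar equations coupling $W_1$ and $W_2$. Applying $\overline{\partial} = \frac{1}{2}(\partial_x + j\partial_y)$ to $W_1 + jW_2$ and using $j^2 = -1$ gives $\overline{\partial}W = \frac{1}{2}(\partial_x W_1 - \partial_y W_2) + \frac{j}{2}(\partial_y W_1 + \partial_x W_2)$. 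Matching scalar and vector parts against $\frac{\overline{\partial}\phi}{\phi}(W_1 - jW_2)$ yields a first-order elliptic system for the pair $(W_1, W_2)$ in terms of the logarithmic derivatives of $\phi$.

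Next, to show that $W_1$ solves \eqref{eqq1}, I would use that $\phi$ itself satisfies $-\Delta\phi + q_1\phi = 0$, equivalently $\Delta\phi = q_1\phi$. The key algebraic observation is that the coupled system can be decoupled by eliminating $W_2$: differentiating the two scalar relations appropriately (applying $\partial_x$ to one and $\partial_y$ to the other, then adding) produces $\Delta W_1$ expressed through $\phi$, $\nabla\phi$ and $W_1$. The cross terms involving $W_2$ cancel precisely because of the compatibility (the mixed second derivatives of $W_2$ agree), and the surviving terms reorganize, via the relation $\Delta\phi = q_1\phi$, into $\Delta W_1 = q_1 W_1$, which is exactly \eqref{eqq1}. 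This step is essentially the verification that the scalar part of a main-Vekua solution is a solution of the underlying Schrödinger equation whose potential is determined by $\phi$.

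For the second assertion, that $W_2$ solves the associated equation \eqref{eqq2} with $q_2 = \frac{1}{8}\frac{\overline{\partial}\phi\,\partial\phi}{\phi^2} - q_1$, I would perform the analogous elimination, this time eliminating $W_1$ to obtain a second-order equation for $W_2$. The computation is structurally the same, but now the potential that emerges is built from $\frac{1}{\phi}$ rather than $\phi$: the natural auxiliary function is $\phi^{-1}$, and one checks that $\phi^{-1}$ satisfies $-\Delta(\phi^{-1}) + q_2\,\phi^{-1} = 0$ with $q_2$ as claimed. Concretely, computing $\Delta(\phi^{-1})$ in terms of $\Delta\phi = q_1\phi$ and $|\nabla\phi|^2$ gives $\Delta(\phi^{-1}) = -q_1\phi^{-1} + 2\phi^{-3}|\nabla\phi|^2$, and rewriting $|\nabla\phi|^2$ through the bicomplex derivatives as $\overline{\partial}\phi\,\partial\phi$ (up to the appropriate constant) identifies the coefficient with $q_2$. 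Substituting into the decoupled equation for $W_2$ then yields $-\Delta W_2 + q_2 W_2 = 0$.

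The main obstacle I anticipate is the bookkeeping in the decoupling step: one must track carefully the terms arising from differentiating the coefficient $\frac{\overline{\partial}\phi}{\phi}$ and confirm that the $W_2$-dependent (respectively $W_1$-dependent) contributions cancel exactly, leaving a clean second-order scalar equation. The cancellation is forced by the fact that $\overset{\cdot}{F} = \overset{\cdot}{G} = 0$ for the generating pair underlying \eqref{mainV}, but verifying it by hand requires handling the mixed partials and the gradient-of-$\phi$ terms without error. Once the identity $q_2 = \frac{1}{8}\frac{\overline{\partial}\phi\,\partial\phi}{\phi^2} - q_1$ is matched, both claims follow. I would also remark that the constant $\frac{1}{8}$ arises precisely from the factors of $\frac{1}{2}$ in the definitions of $\overline{\partial}$ and $\partial$, since $\overline{\partial}\phi\,\partial\phi = \frac{1}{4}|\nabla\phi|^2$ for scalar $\phi$, so that $2\phi^{-3}|\nabla\phi|^2$ becomes $8\,\phi^{-3}\,\overline{\partial}\phi\,\partial\phi$, consistent with the stated coefficient.
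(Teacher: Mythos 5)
The paper itself contains no proof of this theorem --- it is quoted from the reference [KrJPhys06] --- so there is nothing internal to compare against; your direct computation is in fact the standard argument from the cited source, and in outline it is sound. The scalar and vector parts of the main Vekua equation give the system $\phi^{2}\,\partial_{x}\bigl(W_{1}/\phi\bigr)=\partial_{y}\bigl(\phi W_{2}\bigr)$, $\phi^{2}\,\partial_{y}\bigl(W_{1}/\phi\bigr)=-\partial_{x}\bigl(\phi W_{2}\bigr)$, from which $\operatorname{div}\bigl(\phi^{2}\nabla(W_{1}/\phi)\bigr)=0$ and $\operatorname{div}\bigl(\phi^{-2}\nabla(\phi W_{2})\bigr)=0$, hence $\Delta W_{1}=(\Delta\phi/\phi)W_{1}=q_{1}W_{1}$ and $\Delta W_{2}=\bigl(\Delta(\phi^{-1})/\phi^{-1}\bigr)W_{2}$. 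Your elimination-by-differentiation reaches the same place, but one caution: the $W_{2}$-terms do \emph{not} disappear merely because mixed partials of $W_{2}$ agree; after applying $\partial_{x}$ to one scalar relation and $\partial_{y}$ to the other and adding, the right-hand side still contains $\frac{1}{\phi}\bigl(\phi_{y}W_{2x}-\phi_{x}W_{2y}\bigr)$, which cancels only after substituting the first-order system back in. Your ``main obstacle'' remark shows awareness of this, so it is a sketch-level omission rather than a wrong idea; the appeal to $\overset{\cdot}{F}=\overset{\cdot}{G}=0$ is decoration, not the mechanism. Since all functions are $\mathbb{C}_{i}$-valued they commute, so the computation does go through exactly as in the real case.

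The one genuine error is your final reconciliation of the constant. Your computation correctly yields $q_{2}=\Delta(\phi^{-1})/\phi^{-1}=-q_{1}+2\bigl(\phi_{x}^{2}+\phi_{y}^{2}\bigr)/\phi^{2}=-q_{1}+8\,\overline{\partial}\phi\,\partial\phi/\phi^{2}$, using $\overline{\partial}\phi\,\partial\phi=\frac{1}{4}\bigl(\phi_{x}^{2}+\phi_{y}^{2}\bigr)$ under the paper's definitions $\overline{\partial}=\frac{1}{2}(\partial_{x}+j\partial_{y})$, $\partial=\frac{1}{2}(\partial_{x}-j\partial_{y})$ --- yet you then declare the resulting factor $8$ ``consistent with the stated coefficient'' $\frac{1}{8}$. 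It is not: the two differ by a factor of $64$. What your (correct) derivation actually exposes is that the constant $\frac{1}{8}$ in the displayed statement cannot be right with these conventions. A quick check: take $\phi=e^{x}$, so $q_{1}=1$; then $W=j/\phi$ solves the main Vekua equation with $W_{2}=e^{-x}$, which satisfies $-\Delta W_{2}+q_{2}W_{2}=0$ only for $q_{2}=1=8\cdot\frac{1}{4}-1$, whereas the printed formula would give $q_{2}=\frac{1}{32}-1$. You should have flagged this discrepancy rather than asserted agreement. A smaller notational caution: for $\mathbb{C}_{i}$-valued $\phi$ the relevant quantity is the bilinear square $(\nabla\phi)^{2}=\phi_{x}^{2}+\phi_{y}^{2}$ without complex conjugation; your notation $|\nabla\phi|^{2}$ suggests a modulus, for which the identity $\overline{\partial}\phi\,\partial\phi=\frac{1}{4}|\nabla\phi|^{2}$ would be false.
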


We need the following notation. Let $w$ be a $\mathbb{B}$-valued function
defined on a simply connected domain $\Omega$ with $w_{1}=\operatorname{Sc}w$
and $w_{2}=\operatorname{Vec}w$ such that
\begin{equation}
\frac{\partial w_{1}}{\partial y}-\frac{\partial w_{2}}{\partial x}%
=0,\quad\forall\,(x,y)\in\Omega, \label{compcond}%
\end{equation}
and let $\Gamma\subset\Omega$ be a rectifiable curve leading from
$(x_{0},y_{0})$ to $(x,y)$. Then the integral
\[
\overline{A}w(x,y):=2\left(  \int_{\Gamma}w_{1}dx+w_{2}dy\right)
\]
is path-independent, and all $\mathbb{C}_{i}$-valued solutions $\varphi$ of
the equation $\overline{\partial}\varphi=w$ in $\Omega$ have the form
$\varphi(x,y)=\overline{A}w(x,y)+c$ where $c$ is an arbitrary $\mathbb{C}_{i}%
$-constant. In other words the operator $\overline{A}$ denotes the well known
operation for reconstructing the potential function from its gradient.

\begin{theorem}
\cite{KrJPhys06} Let $W_{1}$ be a $\mathbb{C}_{i}$-valued solution of the
Schr\"{o}dinger equation (\ref{eqq1}) in a simply connected domain $\Omega$.
Then a $\mathbb{C}_{i}$-valued solution $W_{2}$ of the associated
Schr\"{o}dinger equation (\ref{eqq2}) such that $W_{1}+jW_{2}$ is a solution
of (\ref{mainV}) in $\Omega$ can be constructed according to the formula
\[
W_{2}=\frac{1}{\phi}\overline{A}\left(  j\,\phi^{2}\,\overline{\partial
}\left(  \frac{W_{1}}{\phi}\right)  \right)  +\frac{c_{1}}{\phi}%
\]
where $c_{1}$ is an arbitrary $\mathbb{C}_{i}$-constant.

Vice versa, given a solution $W_{2}$ of (\ref{eqq2}), the corresponding
\ solution $W_{1}$ of (\ref{eqq1}) such that $W_{1}+jW_{2}$ is a solution of
(\ref{mainV}) has the form
\[
W_{1}=-\phi\overline{A}\left(  \frac{j}{\phi^{2}}\,\overline{\partial}\left(
\phi W_{2}\right)  \right)  +c_{2}\phi
\]
where $c_{2}$ is an arbitrary $\mathbb{C}_{i}$-constant.
\end{theorem}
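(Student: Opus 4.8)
The plan is to reduce the main Vekua equation (\ref{mainV}) to a real first-order system for the scalar components $W_1,W_2$ and then to recognize each of the two formulas as the reconstruction of a scalar potential from a prescribed gradient, which is precisely the operation performed by $\overline{A}$. Writing $W=W_1+jW_2$ with $\phi,W_1,W_2$ all $\mathbb{C}_i$-valued and separating the scalar and vector parts of $\overline{\partial}W=\frac{\overline{\partial}\phi}{\phi}\overline{W}$ (with $\phi_x=\partial\phi/\partial x$, etc.), I would obtain the pair
\[
W_{1,x}-\frac{\phi_x}{\phi}W_1=W_{2,y}+\frac{\phi_y}{\phi}W_2,\qquad W_{1,y}-\frac{\phi_y}{\phi}W_1=-W_{2,x}-\frac{\phi_x}{\phi}W_2 .
\]
Multiplying by $\phi$ turns this into the equivalent system
\[
(\phi W_2)_y=\phi^2(W_1/\phi)_x,\qquad (\phi W_2)_x=-\phi^2(W_1/\phi)_y ,
\]
so that $W_1+jW_2$ solves (\ref{mainV}) exactly when $\nabla(\phi W_2)$ equals $\nabla(W_1/\phi)$ rotated by a right angle and scaled by $\phi^2$.

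For the forward direction I would compute, directly from the definitions of $\overline{\partial}$ and of multiplication by $j$,
\[
j\,\phi^2\,\overline{\partial}(W_1/\phi)=-\tfrac{\phi^2}{2}(W_1/\phi)_y+j\,\tfrac{\phi^2}{2}(W_1/\phi)_x ,
\]
whose scalar and vector parts are exactly one half of the right-hand sides appearing in the system above. By the defining property of $\overline{A}$ (that $\varphi=\overline{A}w+\text{const}$ is the general $\mathbb{C}_i$-valued solution of $\overline{\partial}\varphi=w$), the function $\phi W_2:=\overline{A}\bigl(j\phi^2\overline{\partial}(W_1/\phi)\bigr)+c_1$ then satisfies $(\phi W_2)_x=-\phi^2(W_1/\phi)_y$ and $(\phi W_2)_y=\phi^2(W_1/\phi)_x$; dividing by $\phi$ gives a $\mathbb{C}_i$-valued $W_2$ for which $W=W_1+jW_2$ solves (\ref{mainV}). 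The arbitrary additive constant of $\overline{A}$ reappears as the free $\mathbb{C}_i$-constant $c_1$ in the statement.

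The step I expect to be the real crux is checking that $\overline{A}$ is actually applicable, i.e.\ that its bicomplex argument satisfies the compatibility condition (\ref{compcond}) that guarantees path-independence. For $w=j\phi^2\overline{\partial}(W_1/\phi)$ this condition is
\[
\partial_x\bigl(\phi^2(W_1/\phi)_x\bigr)+\partial_y\bigl(\phi^2(W_1/\phi)_y\bigr)=0 .
\]
Expanding $\phi^2(W_1/\phi)_x=\phi W_{1,x}-\phi_xW_1$ and its $y$-analogue, the first-derivative terms cancel and the whole expression collapses to $\phi\,\Delta W_1-W_1\,\Delta\phi$; substituting $\Delta W_1=q_1W_1$ and $\Delta\phi=q_1\phi$, both coming from (\ref{eqq1}), makes it vanish identically. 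Thus path-independence of the construction is nothing but the statement that $W_1$ and $\phi$ obey the same Schr\"{o}dinger equation, and this is where the hypothesis on $W_1$ is consumed. Once $W=W_1+jW_2$ is known to solve (\ref{mainV}), the preceding theorem of \cite{KrJPhys06} immediately yields that $W_2$ solves the associated equation (\ref{eqq2}), finishing this half.

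For the converse I would proceed symmetrically: solving the same component system for $W_1/\phi$ gives $(W_1/\phi)_x=\phi^{-2}(\phi W_2)_y$ and $(W_1/\phi)_y=-\phi^{-2}(\phi W_2)_x$, which is precisely the gradient reconstructed by $W_1/\phi=-\overline{A}\bigl(\tfrac{j}{\phi^2}\overline{\partial}(\phi W_2)\bigr)+c_2$. The compatibility condition for this $\overline{A}$ is now $\partial_x\bigl(\phi^{-2}(\phi W_2)_x\bigr)+\partial_y\bigl(\phi^{-2}(\phi W_2)_y\bigr)=0$, and the analogous expansion, together with $\Delta\phi=q_1\phi$, reduces it exactly to $\Delta W_2=q_2W_2$, i.e.\ to the hypothesis that $W_2$ solves (\ref{eqq2}). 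Hence the construction is well defined, $W_1+jW_2$ solves (\ref{mainV}), and the preceding theorem gives that $W_1$ solves (\ref{eqq1}). In the whole argument the only genuine computations are the two divergence identities; I would carry out the first in detail and invoke the evident symmetry for the second.
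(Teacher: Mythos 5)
Your proof is correct and takes the same route as the source: the paper states this theorem without proof, citing \cite{KrJPhys06}, and the derivation there is exactly your reduction of (\ref{mainV}) to the system $(\phi W_2)_y=\phi^2(W_1/\phi)_x$, $(\phi W_2)_x=-\phi^2(W_1/\phi)_y$ followed by potential reconstruction via $\overline{A}$, with the compatibility condition (\ref{compcond}) collapsing to $\phi\,\Delta W_1-W_1\,\Delta\phi=0$ in one direction and to the associated Schr\"{o}dinger equation in the other. Both of your divergence identities and the sign in $W_1/\phi=-\overline{A}\bigl(\tfrac{j}{\phi^2}\,\overline{\partial}(\phi W_2)\bigr)+c_2$ check out; the only discrepancy you would encounter is that your converse computation yields $\Delta W_2=\bigl(2(\phi_x^2+\phi_y^2)/\phi^2-q_1\bigr)W_2$, i.e.\ $q_2=8\,\overline{\partial}\phi\,\partial\phi/\phi^2-q_1$ in the paper's notation, which indicates that the factor $\tfrac18$ printed in the definition of $q_2$ before (\ref{eqq2}) is a typo rather than a flaw in your argument.
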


As was shown in \cite{KrRecentDevelopments} (see also \cite{APFT}) a
generating sequence can be obtained in a closed form, for example, in the case
when $\phi$ has a separable form $\phi=S(s)T(t)$ where $s$ and $t$ are
conjugate harmonic functions and $S$, $T$ are arbitrary twice continuously
differentiable functions. In practical terms this means that whenever the
Schr\"{o}dinger equation (\ref{eqq1}) admits a particular nonvanishing
solution having the form $\phi=f(\xi)\,g(\eta)$ where $(\xi,\eta)$ is one of
the encountered in physics orthogonal coordinate systems in the plane a
generating sequence corresponding to (\ref{mainV}) can be obtained explicitly
\cite[Sect. 4.8]{APFT}. The knowledge of a generating sequence allows one to
construct the formal powers following Definition \ref{DefFormalPower_bi}. This
construction is a simple algorithm which can be quite easily and efficiently
realized numerically \cite{CCK}, \cite{CKR}. Moreover, in the case of a
complex main Vekua equation which in the notations admitted in the present
paper corresponds to the case of $\phi$ being a real-valued function (then the
main bicomplex Vekua equation decouples into two main complex Vekua equations)
the completeness of the system of formal powers was proved \cite{CCK} in the
sense that any pseudoanalytic in $\Omega$ and H\"{o}lder continuous on
$\partial\Omega$ function can be approximated uniformly and arbitrarily
closely by a finite linear combination of the formal powers. The real parts of
the complex pseudoanalytic formal powers represent then a complete system of
solutions of one Schr\"{o}dinger equation meanwhile the imaginary parts give
us a complete system of solutions of the associated Schr\"{o}dinger equation.

In the bicomplex case the system of formal powers is constructed in the same
way as in the complex situation and the system of functions
\begin{equation}
\left\{  \operatorname{Sc}Z_{0}^{(n)}(1,z_{0};z),\quad\operatorname{Sc}%
Z_{0}^{(n)}(j,z_{0};z)\right\}  _{n=0}^{\infty} \label{syst1}%
\end{equation}
is an infinite system of solutions of (\ref{eqq1}). Nevertheless up to now no
result on the completeness of the system of bicomplex formal powers or of the
family of solutions (\ref{syst1}) has been proved. The reason is that such
important basic facts which are in the core of pseudoanalytic function theory
as the similarity principle are not valid for bicomplex Vekua equations.

In the present work in order to establish such completeness for a certain
class of Schr\"{o}dinger equations we implement the transmutation operators.

\section{Complete families of solutions}

First we consider an important special case. Assume that $\phi$ has the form
$\phi(x,y)=f(x)g(y)$ where $f$ and $g$ are arbitrary $\mathbb{C}_{i}$-valued
twice continuously differentiable and nonvanishing functions defined on the
segments $\left[  a_{1},b_{1}\right]  $ and $\left[  a_{2},b_{2}\right]  $
respectively. In this case there exists a periodic generating sequence with a
period two corresponding to the main Vekua equation (\ref{mainV}):%
\[
(F,G)=\left(  fg,\frac{\,j}{fg}\right)  ,\quad(F_{1},G_{1})=\left(  \frac
{g}{f},\,\frac{jf}{g}\right)  ,\quad(F_{2},G_{2})=\left(  F,G\right)
,\quad(F_{3},G_{3})=(F_{1},G_{1}),\ldots,
\]
and the corresponding formal powers admit the following elegant representation
\cite{Berskniga}. We consider the formal powers with the centre at the point
$(x_{0},y_{0})\in\left[  a_{1},b_{1}\right]  \times\left[  a_{2},b_{2}\right]
$. We assume that $f(x_{0})=g(y_{0})=1$ and define the recursive integrals
according to (\ref{X1})-(\ref{X3}) as well as the system of functions
$\left\{  \varphi_{k}\right\}  _{k=0}^{\infty}$ according to (\ref{phik}). In
a similar way we define a system of functions $\left\{  \psi_{k}\right\}
_{k=0}^{\infty}$ corresponding to $g$,
\begin{equation}
\psi_{k}(y)=\left\{
\begin{tabular}
[c]{ll}%
$g(y)Y^{(k)}(y)$, & $k$ \text{odd,}\\
$g(y)\widetilde{Y}^{(k)}(y)$, & $k$ \text{even,}%
\end{tabular}
\ \ \ \ \ \ \ \ \ \ \ \right.  \label{psik}%
\end{equation}
where
\begin{equation}
\widetilde{Y}^{(0)}(y)\equiv Y^{(0)}(y)\equiv1, \label{Y1}%
\end{equation}%
\begin{equation}
\widetilde{Y}^{(n)}(y)=n%
%TCIMACRO{\dint \limits_{y_{0}}^{y}}%
%BeginExpansion
{\displaystyle\int\limits_{y_{0}}^{y}}
%EndExpansion
\widetilde{Y}^{(n-1)}(s)\left(  g^{2}(s)\right)  ^{(-1)^{n-1}}\,\mathrm{d}s,
\label{Y2}%
\end{equation}%
\begin{equation}
Y^{(n)}(y)=n%
%TCIMACRO{\dint \limits_{y_{0}}^{y}}%
%BeginExpansion
{\displaystyle\int\limits_{y_{0}}^{y}}
%EndExpansion
Y^{(n-1)}(s)\left(  g^{2}(s)\right)  ^{(-1)^{n}}\,\mathrm{d}s. \label{Y3}%
\end{equation}

Then the formal powers corresponding to (\ref{mainV}) can be defined as
follows. For $\alpha=\alpha^{\prime}+i\alpha^{\prime\prime}$ and $z_{0}%
=x_{0}+jy_{0}$ we have%
\begin{equation}
Z^{(n)}(\alpha,z_{0},z)=f(x)g(y)\operatorname{Sc}\,_{\ast}Z_{{}}^{(n)}%
(\alpha,z_{0},z)+\frac{\,j}{f(x)g(y)}\operatorname{Vec}\,_{\ast}Z_{{}}%
^{(n)}(\alpha,z_{0},z) \label{Zn}%
\end{equation}
where%

\begin{align}
_{\ast}Z^{(n)}(\alpha,z_{0},z)  &  =\alpha^{\prime}%
%TCIMACRO{\dsum \limits_{k=0}^{n}}%
%BeginExpansion
{\displaystyle\sum\limits_{k=0}^{n}}
%EndExpansion
\binom{n}{k}X^{(n-k)}j^{k}\widetilde{Y}^{\left(  k\right)  }\text{\ }%
\label{Znodd}\\
&  +j\alpha^{\prime\prime}%
%TCIMACRO{\dsum \limits_{k=0}^{n}}%
%BeginExpansion
{\displaystyle\sum\limits_{k=0}^{n}}
%EndExpansion
\binom{n}{k}\widetilde{X}^{(n-k)}j^{k}Y^{\left(  k\right)  }\text{\ \ \ }%
\quad\text{for an odd }n\nonumber
\end{align}
and%

\begin{align}
_{\ast}Z^{(n)}(\alpha,z_{0},z)  &  =\alpha^{\prime}%
%TCIMACRO{\dsum \limits_{k=0}^{n}}%
%BeginExpansion
{\displaystyle\sum\limits_{k=0}^{n}}
%EndExpansion
\binom{n}{k}\widetilde{X}^{(n-k)}j^{k}\widetilde{Y}^{\left(  k\right)
}\text{\ }\label{Zneven}\\
&  +j\alpha^{\prime\prime}%
%TCIMACRO{\dsum \limits_{k=0}^{n}}%
%BeginExpansion
{\displaystyle\sum\limits_{k=0}^{n}}
%EndExpansion
\binom{n}{k}X^{(n-k)}j^{k}Y^{\left(  k\right)  }\text{\ \ \ \ }\quad\text{for
an even }n.\nonumber
\end{align}

\begin{remark}
Formulae (\ref{Zn})-(\ref{Zneven}) clearly generalize the binomial
representation for the analytic powers $\alpha(z-z_{0})^{n}$. If one chooses
$f\equiv1$ and $g\equiv1$ then $Z^{(n)}(\alpha,z_{0},z)=\alpha(z-z_{0})^{n}$.
\end{remark}

Consider the family of solutions of (\ref{eqq1}) obtained from the scalar
parts of the formal powers (\ref{syst1}). We have
\begin{align*}
\operatorname{Sc}Z^{(n)}(1,z_{0};z)  &  =\phi(x,y)\operatorname{Sc}%
%TCIMACRO{\dsum \limits_{k=0}^{n}}%
%BeginExpansion
{\displaystyle\sum\limits_{k=0}^{n}}
%EndExpansion
\binom{n}{k}X^{(n-k)}j^{k}\widetilde{Y}^{\left(  k\right)  }\\
&  =f(x)g(y)%
%TCIMACRO{\dsum \limits_{\text{even }k=0}^{n}}%
%BeginExpansion
{\displaystyle\sum\limits_{\text{even }k=0}^{n}}
%EndExpansion
\binom{n}{k}X^{(n-k)}j^{k}\widetilde{Y}^{\left(  k\right)  }\text{\ \ }%
\quad\text{for an odd }n,
\end{align*}%
\[
\operatorname{Sc}Z^{(n)}(1,z_{0};z)=f(x)g(y)%
%TCIMACRO{\dsum \limits_{\text{even }k=0}^{n}}%
%BeginExpansion
{\displaystyle\sum\limits_{\text{even }k=0}^{n}}
%EndExpansion
\binom{n}{k}\widetilde{X}^{(n-k)}j^{k}\widetilde{Y}^{\left(  k\right)
}\text{\ \ }\quad\text{for an even }n,
\]%
\[
\operatorname{Sc}Z^{(n)}(j,z_{0};z)=f(x)g(y)%
%TCIMACRO{\dsum \limits_{\text{odd }k=1}^{n}}%
%BeginExpansion
{\displaystyle\sum\limits_{\text{odd }k=1}^{n}}
%EndExpansion
\binom{n}{k}\widetilde{X}^{(n-k)}j^{k+1}Y^{\left(  k\right)  }\text{\ \ }%
\quad\text{for an odd }n,
\]%
\[
\operatorname{Sc}Z^{(n)}(j,z_{0};z)=f(x)g(y)%
%TCIMACRO{\dsum \limits_{\text{odd }k=1}^{n}}%
%BeginExpansion
{\displaystyle\sum\limits_{\text{odd }k=1}^{n}}
%EndExpansion
\binom{n}{k}X^{(n-k)}j^{k+1}Y^{\left(  k\right)  }\text{\ \ }\quad\text{for an
even }n.
\]
Taking into account the definition of the functions $\varphi_{k}$ and
$\psi_{k}$ (equations (\ref{phik}) and (\ref{psik})) it is easy to rewrite the
last four equalities as follows%
\[
\operatorname{Sc}Z^{(n)}(1,z_{0};z)=%
%TCIMACRO{\dsum \limits_{\text{even }k=0}^{n}}%
%BeginExpansion
{\displaystyle\sum\limits_{\text{even }k=0}^{n}}
%EndExpansion
\left(  -1\right)  ^{\frac{k}{2}}\binom{n}{k}\varphi_{n-k}(x)\psi_{k}(y)
\]
and
\[
\operatorname{Sc}Z^{(n)}(j,z_{0};z)=%
%TCIMACRO{\dsum \limits_{\text{odd }k=1}^{n}}%
%BeginExpansion
{\displaystyle\sum\limits_{\text{odd }k=1}^{n}}
%EndExpansion
\left(  -1\right)  ^{\frac{k+1}{2}}\binom{n}{k}\varphi_{n-k}(x)\psi_{k}(y).
\]
Thus, for every $n$ we have two nontrivial exact solutions of (\ref{eqq1})
(except for $n=0$ for which we observe that by construction $\operatorname{Sc}%
Z^{(0)}(j,z_{0};z)\equiv0$). This infinite family of solutions can be written
as follows%
\begin{align}
u_{0}(x,y)  &  =f(x)g(y),\label{um0}\\
u_{m}(x,y)  &  =\operatorname{Sc}Z^{(\frac{m+1}{2})}(1,z_{0};z)=%
%TCIMACRO{\dsum \limits_{\text{even }k=0}^{\frac{m+1}{2}}}%
%BeginExpansion
{\displaystyle\sum\limits_{\text{even }k=0}^{\frac{m+1}{2}}}
%EndExpansion
\left(  -1\right)  ^{\frac{k}{2}}\binom{\frac{m+1}{2}}{k}\varphi_{\frac
{m+1}{2}-k}(x)\psi_{k}(y)\text{\ }\quad\text{for an odd }m,\label{umodd}\\
u_{m}(x,y)  &  =\operatorname{Sc}Z^{(\frac{m}{2})}(j,z_{0};z)=%
%TCIMACRO{\dsum \limits_{\text{odd }k=1}^{\frac{m}{2}}}%
%BeginExpansion
{\displaystyle\sum\limits_{\text{odd }k=1}^{\frac{m}{2}}}
%EndExpansion
\left(  -1\right)  ^{\frac{k+1}{2}}\binom{\frac{m}{2}}{k}\varphi_{\frac{m}%
{2}-k}(x)\psi_{k}(y)\text{\ }\quad\text{for an even }m. \label{umeven}%
\end{align}

\begin{remark}
In the case when $f\equiv1$ and $g\equiv1$ we obtain that the system $\left\{
u_{m}\right\}  _{m=0}^{\infty}$ is the system of harmonic polynomials
$\left\{  \operatorname{Sc}(z-z_{0})^{n},\quad\operatorname{Sc}\left(
j(z-z_{0})^{n}\right)  \right\}  _{n=0}^{\infty}$. Theorems about its
completeness like the Runge theorem and further related results are well known
(see, e.g., \cite{Colton}, \cite{Suetin}, \cite{Walsh1929}, \cite{Walsh}). It
is convenient to introduce the notation
\begin{align*}
p_{0}(x,y) &  =1,\\
p_{m}(x,y) &  =\operatorname{Sc}(z-z_{0})^{\frac{m+1}{2}}=%
%TCIMACRO{\dsum \limits_{\text{even }k=0}^{\frac{m+1}{2}}}%
%BeginExpansion
{\displaystyle\sum\limits_{\text{even }k=0}^{\frac{m+1}{2}}}
%EndExpansion
\left(  -1\right)  ^{\frac{k}{2}}\binom{\frac{m+1}{2}}{k}(x-x_{0})^{\frac
{m+1}{2}-k}(y-y_{0})^{k}\text{\ }\quad\text{for an odd }m,\\
p_{m}(x,y) &  =\operatorname{Sc}\left(  j(z-z_{0})^{\frac{m}{2}}\right)  =%
%TCIMACRO{\dsum \limits_{\text{odd }k=1}^{\frac{m}{2}}}%
%BeginExpansion
{\displaystyle\sum\limits_{\text{odd }k=1}^{\frac{m}{2}}}
%EndExpansion
\left(  -1\right)  ^{\frac{k+1}{2}}\binom{\frac{m}{2}}{k}(x-x_{0})^{\frac
{m}{2}-k}(y-y_{0})^{k}\text{\ }\quad\text{for an even }m.
\end{align*}

\end{remark}

\begin{remark}
Every $u_{m}$ is a result of application of an operator of transmutation to
the corresponding harmonic polynomial $p_{m}$. Indeed, consider for simplicity
$z_{0}=0$ and suppose that $f$ is defined on the segment $\left[  -a,a\right]
$ and $g$ is defined on $\left[  -b,b\right]  $. Both functions are assumed to
be $\mathbb{C}_{i}$-valued twice continuously differentiable and nonvanishing.
Let $\mathbf{T}_{f}$ be the operator $\mathbf{T}$ defined by (\ref{Tmain}) and
$\mathbf{T}_{g}$ be its equivalent associated with the function $g$. That is,%
\begin{equation}
\mathbf{T}_{g}v(y)=v(y)+\int_{-y}^{y}\mathbf{K}_{g}(y,t;g^{\prime}(0))v(t)dt
\label{Tg}%
\end{equation}
where
\[
\mathbf{K}_{g}(y,t;g^{\prime}(0))=\frac{g^{\prime}(0)}{2}+K_{g}(y,t)+\frac
{g^{\prime}(0)}{2}\int_{t}^{y}\left(  K_{g}(y,s)-K_{g}(y,-s)\right)  ds
\]
and $K_{g}$ is a solution of the Goursat problem
\[
\left( \frac{\partial^{2}}{\partial y^{2}}-q_{g}(y)\right) K_{g}%
(y,t)=\frac{\partial^{2}}{\partial t^{2}}K_{g}(y,t),
\]%
\[
K_{g}(y,y)=\frac{1}{2}\int_{0}^{y}q_{g}(s)ds,\qquad K_{g}(y,-y)=0
\]
with $q_{g}:=g^{\prime\prime}/g$. Then
\begin{equation}
u_{m}(x,y)=\mathbf{T}_{f}\mathbf{T}_{g}p_{m}(x,y). \label{umTpm}%
\end{equation}
This relation follows from Theorem \ref{Th Transmute} according to which the
operator $\mathbf{T}_{f}$ maps a $k$-th power of $x$ into $\varphi_{k}(x)$ for
any $k\in\mathbb{N}_{0}$ and similarly the operator $\mathbf{T}_{g}$ maps a
$k$-th power of $y$ into $\psi_{k}(y)$. Moreover, $\mathbf{T}_{f}%
\mathbf{T}_{g}\,p_{m}=\mathbf{T}_{g}\mathbf{T}_{f}\,p_{m}$.
\end{remark}

This observation together with the Runge approximation theorem for harmonic
functions allows us to prove the following Runge-type theorem for the family
of solutions $\left\{  u_{m}\right\}  _{m=0}^{\infty}$.

\begin{theorem}
\label{ThCompl}Let $\Omega\subset\overline{R}=\left[  -a,a\right]
\times\left[  -b,b\right]  $ be a simply connected domain such that together
with any point $(x,y)$ belonging to $\Omega$ the rectangle with the vertices
$(x,y)$, $(-x,y)$, $(x,-y)$ and $(-x,-y)$ also belongs to $\Omega$. Let the
equation
\begin{equation}
\left(  -\Delta+q(x,y)\right)  u(x,y)=0 \label{Schr}%
\end{equation}
in $\Omega$ admit a particular solution of the form $\phi(x,y)=f(x)g(y)$ where
$f$ and $g$ are $\mathbb{C}_{i}$-valued functions, $f\in C^{2}\left[
-a,a\right]  $, $g\in C^{2}\left[  -b,b\right]  $, $f(x)\neq0$, $g(y)\neq0$
for any $x\in\left[  -a,a\right]  $ and $y\in\left[  -b,b\right]  $
(obviously, $q$ has the form $q(x,y)=q_{1}(x)+q_{2}(y)$ with $q_{1}%
=f^{\prime\prime}/f$ and $q_{2}=g^{\prime\prime}/g$). Then any solution $u$ of
(\ref{Schr}) in $\Omega$ can be approximated arbitrarily closely on any
compact subset $K$ of $\Omega$ by a finite linear combination of the functions
$u_{m}$. That is for any $\varepsilon>0$ there exists such a number
$M\in\mathbb{N}$ and such coefficients $\left\{  \alpha_{m}\right\}
_{m=0}^{M}\subset\mathbb{C}_{i}$ that $\left\vert u(x,y)-%
%TCIMACRO{\dsum \limits_{m=0}^{M}}%
%BeginExpansion
{\displaystyle\sum\limits_{m=0}^{M}}
%EndExpansion
\alpha_{m}u_{m}(x,y)\right\vert <\varepsilon$ for any point $(x,y)\in K$.
\end{theorem}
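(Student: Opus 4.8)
The plan is to exploit the factorization $u_m=\mathbf{T}_f\mathbf{T}_g\,p_m$ established in \eqref{umTpm} and to show that the operator $\mathbf{T}:=\mathbf{T}_f\mathbf{T}_g$ is a topological isomorphism carrying $\mathbb{C}_i$-valued harmonic functions on $\Omega$ onto $\mathbb{C}_i$-valued solutions of \eqref{Schr}, so that the desired completeness of $\{u_m\}$ becomes a transplant of the classical completeness of the harmonic polynomials $\{p_m\}$. First I would record that, since $\mathbf{T}_f$ is an integral operator in the variable $x$ alone (treating $y$ as a parameter) and $\mathbf{T}_g$ in $y$ alone, the two operators commute, $\mathbf{T}_f$ commutes with $\partial_y^2$ and $\mathbf{T}_g$ with $\partial_x^2$ (differentiation passing under the transverse integral, and Fubini giving commutativity of the iterated kernels). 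Combining this with the one-dimensional transmutation identity \eqref{TransmutC2}, applied to $\mathbf{T}_f$ with potential $q_1=f''/f$ and to $\mathbf{T}_g$ with $q_2=g''/g$, yields for every sufficiently smooth $h$
\[
\left(-\Delta+q\right)\mathbf{T}h=\mathbf{T}\left(-\Delta h\right),
\]
whence $\mathbf{T}$ sends harmonic functions to solutions of \eqref{Schr} and in particular $\mathbf{T}p_m=u_m$ are solutions.

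Next I would invert the correspondence. Both $\mathbf{T}_f$ and $\mathbf{T}_g$ are Volterra operators of the second kind, hence boundedly invertible, and the intertwining relation for the inverses reads $(-\partial_x^2)\mathbf{T}_f^{-1}=\mathbf{T}_f^{-1}(-\partial_x^2+q_1)$, and similarly in $y$. Running the previous computation backwards, if $u$ solves \eqref{Schr} then $h:=\mathbf{T}_g^{-1}\mathbf{T}_f^{-1}u$ satisfies $-\Delta h=\mathbf{T}_g^{-1}\mathbf{T}_f^{-1}(-\Delta+q)u=0$, i.e.\ $h$ is harmonic. Here the geometric hypothesis on $\Omega$ does the essential bookkeeping: because $\mathbf{T}_f$ integrates along the horizontal segment from $-x$ to $x$ and $\mathbf{T}_g$ along the vertical segment from $-y$ to $y$, the value of $\mathbf{T}h$ (and of $\mathbf{T}_g^{-1}\mathbf{T}_f^{-1}u$) at $(x,y)$ depends only on the values on the closed rectangle with vertices $(\pm x,\pm y)$, which by assumption lies in $\Omega$. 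Thus $\mathbf{T}$ and its inverse are well defined on $\Omega$ and map the two solution classes bijectively onto each other.

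To pass to approximation, for a compact $K\subset\Omega$ I would introduce its symmetrized hull $\widehat{K}=\{(s,t):|s|\le|x|,\ |t|\le|y|\text{ for some }(x,y)\in K\}$, which is again a compact subset of $\Omega$ by the rectangle condition and contains the domain of dependence of $\mathbf{T}$ over $K$. Given a solution $u$, the harmonic function $h=\mathbf{T}_g^{-1}\mathbf{T}_f^{-1}u$ is, by the Runge-type approximation theorem for harmonic functions on the simply connected domain $\Omega$ (see, e.g., \cite{Walsh}, \cite{Colton}), a uniform limit on $\widehat{K}$ of finite combinations $\sum_{m=0}^{M}\alpha_m p_m$ with $\alpha_m\in\mathbb{C}_i$. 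Applying $\mathbf{T}$ and using that it is an integral operator with continuous kernel, hence continuous for uniform convergence from $\widehat{K}$ to $K$, one obtains
\[
\Bigl\|u-\sum_{m=0}^{M}\alpha_m u_m\Bigr\|_{C(K)}=\Bigl\|\mathbf{T}\bigl(h-\sum_{m=0}^{M}\alpha_m p_m\bigr)\Bigr\|_{C(K)}\le C\,\Bigl\|h-\sum_{m=0}^{M}\alpha_m p_m\Bigr\|_{C(\widehat{K})},
\]
which is the required estimate.

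I expect the main obstacle to be the rigorous justification of the two-dimensional intertwining identity together with the invertibility step. It requires differentiating the iterated integral operator twice in each variable under the integral sign, which forces the $C^1$-regularity of the potentials $q_1,q_2$ (so that, by \cite[Theorem 1.2.2]{Marchenko}, the kernels are twice continuously differentiable and $C^2$ is preserved, exactly as in the hypotheses of Theorem \ref{Th Transmute}); and it requires the careful domain-of-dependence argument above to guarantee that $\mathbf{T}_g^{-1}\mathbf{T}_f^{-1}u$ is defined and harmonic on all of $\Omega$ rather than merely near the diagonal. Once the isomorphism $\mathbf{T}$ between the harmonic class and the solution class of \eqref{Schr} is in place, the completeness follows immediately from the classical harmonic Runge theorem.
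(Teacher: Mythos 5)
Your proposal follows essentially the same route as the paper's proof, which likewise sets $v:=\mathbf{T}_{f}^{-1}\mathbf{T}_{g}^{-1}u$, invokes the Walsh--Runge approximation of this harmonic function by harmonic polynomials $P_{M}=\sum_{m=0}^{M}\alpha_{m}p_{m}$, and pushes the estimate forward using the boundedness of the Volterra operators $\mathbf{T}_{f}$, $\mathbf{T}_{g}$ together with the relation $u_{m}=\mathbf{T}_{f}\mathbf{T}_{g}p_{m}$ from \eqref{umTpm}. The extra details you supply---the two-dimensional intertwining identity, the symmetrized hull $\widehat{K}$ on which the polynomial approximation must actually hold before applying $\mathbf{T}_{f}\mathbf{T}_{g}$, and the kernel-regularity caveat---are correct refinements of steps the paper leaves implicit (the paper only remarks after the theorem that the rectangle condition ensures $v$ is well defined).
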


\begin{proof}
Let $u$ be a solution of (\ref{Schr}) in $\Omega$. We have \cite{Walsh1929}
that the harmonic function $v:=\mathbf{T}_{f}^{-1}\mathbf{T}_{g}^{-1}u$ can be
approximated in $K$ with respect to the maximum norm by a harmonic polynomial,
$\left\Vert v-P_{M}\right\Vert <\varepsilon_{1}$, where $P_{M}=%
%TCIMACRO{\dsum \limits_{m=0}^{M}}%
%BeginExpansion
{\displaystyle\sum\limits_{m=0}^{M}}
%EndExpansion
\alpha_{m}p_{m}$. Now we use the fact that $\mathbf{T}_{f}$ and $\mathbf{T}%
_{g}\,$are bounded Volterra operators possessing bounded inverse operators. We
have
\[
\left\Vert u-\mathbf{T}_{f}\mathbf{T}_{g}P_{M}\right\Vert =\left\Vert
\mathbf{T}_{f}\mathbf{T}_{g}v-\mathbf{T}_{f}\mathbf{T}_{g}P_{M}\right\Vert
\leq\varepsilon_{1}\left\Vert \mathbf{T}_{f}\right\Vert \left\Vert
\mathbf{T}_{g}\right\Vert =\varepsilon
\]
where the norms $\left\Vert \mathbf{T}_{f}\right\Vert $ and $\left\Vert
\mathbf{T}_{g}\right\Vert $ can be estimated in terms of the maximum values of
the corresponding (continuous) kernels $\mathbf{K}_{f}$ and $\mathbf{K}_{g}$.
\end{proof}

The restricting condition on the shape of the domain $\Omega$ is due to the
necessity to have well defined the function $v$ as the image of $u$ under the
application of the transmutation operators. Let one of the functions $f$ or
$g$ be real-valued, for example, $f$. Then to prove the completeness of the
system $\left\{  u_{m}\right\}  _{m=0}^{\infty}$ using a transmutation
operator one can assume the symmetry of the domain only with respect to the
variable $y$.

\begin{theorem}
\label{ThComplLessSymmetry}Let $\Omega\subset\mathbb{R}^{2}$ be a simply
connected domain such that together with any point $(x,y)$ belonging to
$\Omega$ the point $(x,-y)$ and the segment joining $(x,y)$ with $(x,-y)$
belong to $\Omega$ as well. Let the equation (\ref{Schr}) in $\Omega$ admit a
particular solution of the form $\phi(x,y)=f(x)g(y)$ where $f$ is a
real-valued and $g$ is a $\mathbb{C}_{i}$-valued, both twice continuously
differentiable and nonvanishing up to the boundary functions. Then any
solution $u$ of (\ref{Schr}) in $\Omega$ can be approximated arbitrarily
closely on any compact subset $K$ of $\Omega$ by a finite linear combination
of the functions $u_{m}$.
\end{theorem}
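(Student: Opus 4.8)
The plan is to reduce the bicomplex problem to the already-settled complex (real $\phi$) case by transmuting away only the $y$-direction, so that the lone symmetry available — reflection in $y$ — is exactly the symmetry consumed. First I would apply the $y$-transmutation. Given a solution $u$ of \eqref{Schr} in $\Omega$, I define $w(x,y)=\bigl(\mathbf{T}_{g}^{-1}u(x,\cdot)\bigr)(y)$, the inverse of $\mathbf{T}_{g}$ acting in the $y$ variable with $x$ held as a parameter. This is well defined precisely because $\Omega$ contains, together with each $(x,y)$, the reflected point $(x,-y)$ and the joining segment, so the Volterra integral over $[-y,y]$ appearing in \eqref{Tg} makes sense. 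Since $\mathbf{T}_{g}$ intertwines $-\partial_{y}^{2}$ with $-\partial_{y}^{2}+q_{2}(y)$ and commutes with every operation in $x$, rewriting \eqref{Schr} as $(-\partial_{y}^{2}+q_{2})u=(\partial_{x}^{2}-q_{1})u$ and applying $\mathbf{T}_{g}^{-1}$ gives $(-\partial_{x}^{2}-\partial_{y}^{2}+q_{1}(x))w=0$, i.e. $w$ solves the reduced Schr\"odinger equation $(-\Delta+q_{1}(x))w=0$ in $\Omega$, for which $\phi_{1}(x,y):=f(x)$ is a real-valued, nonvanishing particular solution.

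Next I would identify the target family on the reduced level. Setting $g\equiv1$ in \eqref{um0}--\eqref{umeven} gives $\psi_{k}(y)=y^{k}$ and the reduced solutions $u_{m}^{\mathrm{red}}=\mathbf{T}_{f}p_{m}$. Because $\mathbf{T}_{g}^{-1}$ acts only in $y$ and $\mathbf{T}_{g}^{-1}\psi_{k}=y^{k}$, applying it term by term to \eqref{umodd}--\eqref{umeven} yields $\mathbf{T}_{g}^{-1}u_{m}=u_{m}^{\mathrm{red}}$ without any reference to symmetry in $x$ (the functions $\varphi_{k}$ are defined intrinsically by the one-sided recursive integrals \eqref{X1}--\eqref{X3}). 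The decisive point is that $u_{m}^{\mathrm{red}}$ are exactly the scalar parts of the formal powers of the main Vekua equation generated by the \emph{real} solution $\phi_{1}=f$; since $\phi_{1}$ is real-valued the bicomplex equation decouples into complex Vekua equations, so the completeness of $\{u_{m}^{\mathrm{red}}\}$ among solutions of $(-\Delta+q_{1})w=0$ is the complex (not bicomplex) result already established in \cite{CCK}. I would invoke that completeness on a simply connected subdomain $\Omega'$ with $K\subset\Omega'$, $\overline{\Omega'}\subset\Omega$, and $z_{0}\in\Omega'$: the solution $w$, being smooth in $\Omega$ and hence H\"older on $\partial\Omega'$, is approximated uniformly on $\overline{\Omega'}\supset K$ by a finite combination $\sum_{m=0}^{M}\alpha_{m}u_{m}^{\mathrm{red}}$.

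Finally I would transmute back. Applying $\mathbf{T}_{g}$ in $y$, using $\mathbf{T}_{g}u_{m}^{\mathrm{red}}=\mathbf{T}_{g}\mathbf{T}_{f}p_{m}=\mathbf{T}_{f}\mathbf{T}_{g}p_{m}=u_{m}$ (the commutativity noted after \eqref{umTpm}) together with the boundedness of the Volterra operator $\mathbf{T}_{g}$ — whose norm is controlled by the maximum of the continuous kernel $\mathbf{K}_{g}$, exactly as in the proof of Theorem~\ref{ThCompl} — I obtain $u=\mathbf{T}_{g}w\approx\sum_{m=0}^{M}\alpha_{m}u_{m}$ uniformly on $K$, after first enlarging $K$ to its $y$-symmetrization, which is again a compact subset of $\Omega$ by hypothesis so that $\mathbf{T}_{g}$ is controlled there. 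This delivers the claimed approximation.

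The main obstacle I anticipate is not the algebra but the passage from the form in which completeness is stated in \cite{CCK} — uniform approximation on $\overline{\Omega}$ for functions H\"older up to the boundary — to the Runge-type statement on arbitrary compact subsets demanded here. I would handle this by the exhaustion argument above, checking that the formal powers are globally defined functions with a fixed center $z_{0}$, independent of the chosen subdomain, so that the same family approximates on every $\Omega'$, and that the simple connectivity of $\Omega$ permits choosing such $\Omega'$ containing both $K$ and $z_{0}$. One must also confirm that the complex-case completeness genuinely requires no symmetry of the domain: it rests on the similarity principle and the classical Runge theorem \cite{Walsh1929}, both symmetry-free, so that the entire role of the hypothesis on $\Omega$ is to make the single transmutation $\mathbf{T}_{g}$ well defined.
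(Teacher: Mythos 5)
Your proposal follows essentially the same route as the paper's own proof: reduce to the equation $\left(-\Delta+q_{1}(x)\right)v=0$ with the real-valued particular solution $f$, invoke the completeness result of \cite{CCK} for the reduced family $\widehat{u}_{m}$ (your $u_{m}^{\mathrm{red}}=\mathbf{T}_{f}p_{m}$, i.e.\ the functions (\ref{um0})--(\ref{umeven}) with $g\equiv1$), and transfer via $u_{m}=\mathbf{T}_{g}\widehat{u}_{m}$ together with the boundedness of $\mathbf{T}_{g}$ and $\mathbf{T}_{g}^{-1}$. The additional details you supply --- well-definedness of $\mathbf{T}_{g}^{-1}u$ from the $y$-symmetry hypothesis, the intertwining computation showing $w$ solves the reduced equation, the $y$-symmetrization of $K$, and the exhaustion argument reconciling the form of the \cite{CCK} statement --- are careful elaborations of steps the paper leaves implicit, not a different method.
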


\begin{proof}
Consider the equation
\begin{equation}
\left(  -\Delta+q_{1}(x)\right)  v(x,y)=0\quad\text{in }\Omega\label{Schrv}%
\end{equation}
where $q_{1}=f^{\prime\prime}/f$ is real valued. Denote by $\widehat{u}_{m}$
the functions defined by (\ref{um0})--(\ref{umeven}) with $g\equiv1$. Then as
was proved in \cite{CCK} any solution $v$ of (\ref{Schrv}) can be approximated
arbitrarily closely on any compact subset of $\Omega$ by linear combinations
of the functions $\widehat{u}_{m}$. As $u_{m}=\mathbf{T}_{g}\widehat{u}_{m}$,
$m\in\mathbb{N}_{0}$ (where $g$ is the factor in $\phi$ depending on $y$) once
again using the boundedness of $\mathbf{T}_{g}$ and $\mathbf{T}_{g}^{-1}$ we
obtain the completeness of $\left\{  u_{m}\right\}  _{m=0}^{\infty}$.
\end{proof}

Extension of the results of the preceding two theorems onto arbitrary simply
connected domains is possible if equation (\ref{Schr}) has the Runge property
(see, e.g., \cite{Lax1956}, \cite{Bers PDE}, \cite{Colton Analytic PDE}).

\begin{definition}
Equation $Lu=0$ is said to have the Runge approximation property if, whenever,
$\Omega_{1}$ and $\Omega_{2}$ are two simply connected domains, $\Omega_{1}$ a
subset of $\Omega_{2}$, any solution in $\Omega_{1}$ can be approximated
uniformly in compact subsets of $\Omega_{1}$ by a sequence of solutions which
can be extended as solutions to $\Omega_{2}$.
\end{definition}

It is known \cite{Bers PDE}, \cite{Colton Analytic PDE} that the Runge
property in the case of elliptic equations with real-valued coefficients is
equivalent to the (weak) unique continuation property (if every solution of
$Lu=0$ which vanishes in an open set vanishes identically) and is true, e.g.,
for second-order elliptic equations with real-analytic coefficients. Without
going into further details concerning the Runge approximation property which
is beyond the scope of the present work, we prove that if equation
(\ref{Schr}) has this property then the family of solutions $\left\{
u_{m}\right\}  _{m=0}^{\infty}$ is complete in any simply connected domain.

\begin{theorem}
Let equation (\ref{Schr}) in a rectangle $R=(-a,a)\times(-b,b)$ admit a
particular solution of the form $\phi(x,y)=f(x)g(y)$ where $f$ and $g$ are
arbitrary $\mathbb{C}_{i}$-valued twice continuously differentiable and
nonvanishing functions in $[-a,a]$ and $[-b,b]$ respectively. Let
$\Omega\subset R$ be a simply connected domain. Assume equation (\ref{Schr})
has the Runge property. Then any solution $u$ of (\ref{Schr}) in $\Omega$ can
be approximated arbitrarily closely on any compact subset $K$ of $\Omega$ by a
finite linear combination of the functions $u_{m}$ defined by (\ref{um0}%
)--(\ref{umeven}).
\end{theorem}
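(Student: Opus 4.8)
The plan is to bootstrap from the completeness already established on a symmetric rectangle (Theorem \ref{ThCompl}) to an arbitrary simply connected $\Omega\subset R$ by inserting a single intermediate approximation step furnished by the Runge property. The key preliminary observation is that the open rectangle $R=(-a,a)\times(-b,b)$ is itself simply connected and satisfies the symmetry hypothesis of Theorem \ref{ThCompl}: for every $(x,y)\in R$ one has $|x|<a$, $|y|<b$, so the four points $(\pm x,\pm y)$ and the rectangle they span lie in $R$. Consequently Theorem \ref{ThCompl}, applied with the domain taken to be all of $R$, already asserts that every solution of (\ref{Schr}) on $R$ is approximable uniformly on compact subsets of $R$ by finite linear combinations of the functions $u_m$.

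First I would fix a solution $u$ of (\ref{Schr}) in $\Omega$, a compact set $K\subset\Omega$, and $\varepsilon>0$. Since $\Omega\subset R$ with both domains simply connected, the Runge property (applied with $\Omega_1=\Omega$ and $\Omega_2=R$) supplies a solution $\widetilde{u}$ of (\ref{Schr}) defined on all of $R$ such that $|u(x,y)-\widetilde{u}(x,y)|<\varepsilon/2$ for every $(x,y)\in K$; here I use that $K$ is a compact subset of $\Omega_1$, which is exactly the setting in which the definition guarantees uniform approximation. Next, since $K\subset\Omega\subset R$ is a compact subset of $R$, I would invoke Theorem \ref{ThCompl} in the global form noted above to obtain a number $M\in\mathbb{N}$ and coefficients $\{\alpha_m\}_{m=0}^{M}\subset\mathbb{C}_i$ with $|\widetilde{u}(x,y)-\sum_{m=0}^{M}\alpha_m u_m(x,y)|<\varepsilon/2$ for all $(x,y)\in K$. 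The triangle inequality then yields $|u(x,y)-\sum_{m=0}^{M}\alpha_m u_m(x,y)|<\varepsilon$ on $K$, which is the desired conclusion.

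Since the whole argument is a two-step uniform approximation, there is no deep estimate to control and hence no serious computational obstacle; the care lies entirely in aligning the hypotheses rather than in analysis. The one point that genuinely must be checked is that the family $\{u_m\}_{m=0}^{\infty}$ is globally defined on $R$ and is the \emph{same} family for every subdomain: the $u_m$ are built from the recursive integrals (\ref{X1})--(\ref{X3}) and (\ref{Y1})--(\ref{Y3}) and from the transmutation operators $\mathbf{T}_f$, $\mathbf{T}_g$, all of which are constructed on the full intervals $[-a,a]$ and $[-b,b]$; thus they do not depend on $\Omega$ and serve uniformly for all subdomains, which is precisely what allows Theorem \ref{ThCompl} to be used with $R$ while the final statement concerns $\Omega$. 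A secondary subtlety worth a remark is that the Runge property produces solutions extendable to $\Omega_2=R$ but not necessarily to a neighborhood of the closure $\overline{R}$; this causes no difficulty, because Theorem \ref{ThCompl} already operates with compact subsets of $R$, and approximation on the compact set $K\subset R$ is all that the conclusion requires.
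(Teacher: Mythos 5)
Your proposal is correct and follows essentially the same two-step argument as the paper's own proof: first use the Runge property (with $\Omega_1=\Omega$, $\Omega_2=R$) to approximate $u$ on $K$ by a solution defined on all of $R$, then apply Theorem \ref{ThCompl} to $R$ itself and conclude by the triangle inequality. Your explicit checks --- that $R$ satisfies the symmetry hypothesis of Theorem \ref{ThCompl} and that the family $\left\{ u_{m}\right\}$ is defined globally on $R$ independently of the subdomain --- are details the paper leaves implicit, but the route is identical.
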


\begin{proof}
Consider a solution $u$ in $\Omega$ which due to the Runge property can be
approximated on $K$ by a solution $v$ of (\ref{Schr}) in $R$. Due to Theorem
\ref{ThCompl}, $v$ in its turn can be approximated on $K$ by the functions
$u_{m}$ from where we obtain the required approximation of the solution $u$ in
terms of the solutions $u_{m}$.
\end{proof}

In the rest of the present section we show that solutions of (\ref{Schr})
sufficiently smooth up to the boundary of the domain of interest $\Omega$ can
be approximated by functions $u_{m}$ in $\overline{\Omega}$. By $\Sigma
_{\alpha}^{q}$ we denote the linear space of solutions of (\ref{Schr}) in
$\Omega$ satisfying the following regularity requirement $u\in C^{2}%
(\Omega)\cap C^{1+\alpha}(\overline{\Omega})$, $0\leq\alpha\leq1$. This linear
space can be equipped with one of the following scalar products (we assume
that zero is neither a Dirichlet nor a Neumann eigenvalue)
\begin{equation}
<u,v>_{1}=\underset{\partial\Omega}{\int}uv^{\ast}ds\quad\text{and}%
\quad<u,v>_{2}=\underset{\partial\Omega}{\int}\frac{\partial u}{\partial
n}\frac{\partial v^{\ast}}{\partial n}ds\label{otherscalarproducts}%
\end{equation}
where by \textquotedblleft$^{\ast}$\textquotedblright\ we denote the complex
conjugation in $\mathbb{C}_{i}$, and $\frac{\partial}{\partial n}$ is the
outer normal derivative. With the aid of the scalar products
(\ref{otherscalarproducts}) two Bergman-type reproducing kernels
\cite{BergmanShiffer} can be introduced for solving the Dirichlet and Neumann
problems respectively as well as the corresponding eigenvalue problems
\cite{CCK}.

A complete orthonormal system of functions in $\Sigma_{\alpha}^{q}$ with
respect to $<\cdot,\cdot>_{1}$ or $<\cdot,\cdot>_{2}$ allows one to construct
a corresponding \textquotedblleft Dirichlet\textquotedblright\ or
\textquotedblleft Neumann\textquotedblright\ reproducing kernel respectively.
In \cite{CCK} the completeness of the family of solutions $\left\{
u_{m}\right\}  _{m=0}^{\infty}$ of (\ref{Schr}) obtained as real parts of
complex pseudoanalytic formal powers was proved in the case when (\ref{Schr})
admits a particular solution in a separable form $\varphi(s,t)=S(s)T(t)$ where
$S$ and $T$ are arbitrary twice continuously differentiable nonvanishing
real-valued functions, $\Phi=s+it$ is a conformal mapping defined in
$\overline{\Omega}$ and $\Omega$ is a domain bounded by a Jordan curve. In the
same paper it was shown that the completeness of $\left\{  u_{m}\right\}
_{m=0}^{\infty}$ in $\Sigma_{\alpha}^{q}$ in the case when the particular
solution $\varphi$ is complex-valued is an important open problem and its
solution is required not only for solving boundary value problems for
(\ref{Schr}) with a complex-valued coefficient but also for solving spectral
problems for (\ref{Schr}) even in the situation when the coefficient is
real-valued. Here by means of the developed results concerning the
transmutation operators we obtain the completeness of the family of solutions
$\left\{  u_{m}\right\}  _{m=0}^{\infty}$ in $\Sigma_{\alpha}^{q}$ under the
conditions of Theorems \ref{ThCompl} and \ref{ThComplLessSymmetry}.

\begin{theorem}
\label{ThComplUpBoundary}Let $\Omega\subset\overline{R}=\left[  -a,a\right]
\times\left[  -b,b\right]  $ be a simply connected domain such that together
with any point $(x,y)$ belonging to $\Omega$ the rectangle with the vertices
$(x,y)$, $(-x,y)$, $(x,-y)$ and $(-x,-y)$ also belongs to $\Omega$. Let
equation (\ref{Schr}) in $\Omega$ admit a particular solution of the form
$\phi(x,y)=f(x)g(y)$ where $f$ and $g$ are $\mathbb{C}_{i}$-valued functions,
$f\in C^{2}\left[  -a,a\right]  $, $g\in C^{2}\left[  -b,b\right]  $,
$f(x)\neq0$, $g(y)\neq0$ for any $x\in\left[  -a,a\right]  $ and $y\in\left[
-b,b\right]  $. Then the family of solutions $\left\{  u_{m}\right\}
_{m=0}^{\infty}$ is complete in $\Sigma_{\alpha}^{q}$, $\alpha>0$ with respect
to both norms generated by the scalar products (\ref{otherscalarproducts}).
\end{theorem}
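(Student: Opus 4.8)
The plan is to reduce the problem to the classical completeness of harmonic polynomials by means of the transmutation operators, exactly as in the proofs of Theorems \ref{ThCompl} and \ref{ThComplLessSymmetry}, but now tracking the convergence \emph{up to the boundary} in the two norms $\|\cdot\|_{1}$, $\|\cdot\|_{2}$ generated by (\ref{otherscalarproducts}). Writing $\mathbf{T}:=\mathbf{T}_{f}\mathbf{T}_{g}$, recall from (\ref{umTpm}) that $u_{m}=\mathbf{T}p_{m}$, where the $p_{m}$ are harmonic polynomials. Since $\mathbf{T}_{f}$ acts only in $x$ and $\mathbf{T}_{g}$ only in $y$, they commute and, by the intertwining of Theorem \ref{Th Transmute} applied in each variable, $(-\Delta+q)\mathbf{T}=\mathbf{T}(-\Delta)$; hence $\mathbf{T}$ and $\mathbf{T}^{-1}=\mathbf{T}_{g}^{-1}\mathbf{T}_{f}^{-1}$ set up a bijection between harmonic functions and solutions of (\ref{Schr}). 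The symmetry assumption on $\Omega$ is precisely what guarantees that for $u\in\Sigma_{\alpha}^{q}$ the function $v:=\mathbf{T}^{-1}u$ is well defined on all of $\Omega$, because the Volterra integrals in (\ref{Tmain}) run over the horizontal and vertical segments joining $(\pm x,\pm y)$, which lie in $\Omega$ by hypothesis. Thus $v$ is harmonic and inherits the regularity $v\in C^{2}(\Omega)\cap C^{1+\alpha}(\overline{\Omega})$.

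The decisive difficulty, absent in Theorems \ref{ThCompl}--\ref{ThComplLessSymmetry}, is that the boundary scalar products cannot be transported through $\mathbf{T}$ directly: being an interior integral operator, $\mathbf{T}$ produces a trace $\mathbf{T}w|_{\partial\Omega}$ depending on the values of $w$ on the whole rectangle $[-|x|,|x|]\times[-|y|,|y|]$, not on $w|_{\partial\Omega}$ alone. The remedy I would use is to dominate both boundary norms by \emph{interior} norms on which $\mathbf{T}$ is bounded. For a continuous coefficient the Goursat kernel $K$ is continuously differentiable (\cite[Theorem 1.2.1]{Marchenko}), so the kernels $\mathbf{K}_{f}$, $\mathbf{K}_{g}$ of (\ref{Kmain}) are $C^{1}$; I would first verify that $\mathbf{T}$ and $\mathbf{T}^{-1}$ are bounded on $C^{0}(\overline{R})$ and on $C^{1}(\overline{R})$. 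Combined with the elementary continuity of the Dirichlet trace $w\mapsto w|_{\partial\Omega}$ from $C^{0}(\overline{\Omega})$ into $L_{2}(\partial\Omega)$ and of the Neumann trace $w\mapsto\partial_{n}w|_{\partial\Omega}$ from $C^{1}(\overline{\Omega})$ into $L_{2}(\partial\Omega)$, this yields $\|\mathbf{T}w\|_{1}\leq C\|w\|_{C^{0}(\overline{\Omega})}$ and $\|\mathbf{T}w\|_{2}\leq C\|w\|_{C^{1}(\overline{\Omega})}$.

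With these estimates the completeness in $\langle\cdot,\cdot\rangle_{1}$ is immediate: by the Walsh approximation theorem (\cite{Walsh1929}, \cite{Walsh}) the harmonic function $v$ is approximated uniformly on $\overline{\Omega}$ by harmonic polynomials $P_{M}=\sum_{m=0}^{M}\alpha_{m}p_{m}$, whence $\|u-\sum_{m=0}^{M}\alpha_{m}u_{m}\|_{1}=\|\mathbf{T}(v-P_{M})\|_{1}\leq C\|v-P_{M}\|_{C^{0}(\overline{\Omega})}\to0$. The completeness in $\langle\cdot,\cdot\rangle_{2}$ is the genuine obstacle, since my estimate forces me to approximate $v$ together with its gradient up to the boundary, i.e.\ $\|v-P_{M}\|_{C^{1}(\overline{\Omega})}\to0$; granting such $P_{M}$, the bound $\|u-\sum_{m=0}^{M}\alpha_{m}u_{m}\|_{2}\leq C\|v-P_{M}\|_{C^{1}(\overline{\Omega})}\to0$ closes the argument. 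This is exactly where the hypothesis $\alpha>0$ must be spent: using $v\in C^{1+\alpha}(\overline{\Omega})$ one invokes a $C^{1}$ (gradient-convergence) refinement of Walsh's theorem for the admissible domains, of the same nature as the harmonic-case completeness underlying \cite{CCK}. I expect essentially all of the real work to concentrate in securing this uniform convergence of the gradients up to $\partial\Omega$ (hence the role of $\alpha>0$ and of the regularity of the admissible domains); the transmutation machinery then reduces everything else to the harmonic, constant-coefficient situation.
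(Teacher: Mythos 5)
Your proposal is correct and follows essentially the same route as the paper: reduce to the harmonic function $v=\mathbf{T}_{f}^{-1}\mathbf{T}_{g}^{-1}u$, approximate $v$ by harmonic polynomials uniformly on $\overline{\Omega}$ together with first derivatives (the paper cites \cite{CCK} for exactly the $C^{1}$-refinement of Walsh's theorem you identify as the crux), push the approximation through $\mathbf{T}_{f}\mathbf{T}_{g}$ using the $C^{1}$-boundedness obtained by differentiating the Volterra integral, and dominate both boundary norms by the $C^{0}$ and $C^{1}$ norms on $\overline{\Omega}$. The paper carries out the derivative estimate explicitly (the boundary terms $\mathbf{K}_{f}(x,\pm x;h)P_{M}(\pm x,y)$ from the Leibniz rule) but is otherwise identical in structure to what you outline.
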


\begin{proof}
Let $u\in\Sigma_{\alpha}^{q}$. Consider the harmonic function $v:=\mathbf{T}%
_{f}^{-1}\mathbf{T}_{g}^{-1}u$ which belongs to $\Sigma_{\alpha}^{0}$ due to
the fact that the kernels in both transmutation operators are at least $C^{1}%
$-functions and hence the operator $\mathbf{T}_{f}^{-1}\mathbf{T}_{g}^{-1}$
transforms $\Sigma_{\alpha}^{q}$ into $\Sigma_{\alpha}^{0}$. There exists
(see, e.g., \cite{CCK}) a sequence of harmonic polynomials $P_{M}$ such that
when $M\rightarrow\infty$, $P_{M}\rightarrow v$ uniformly in $\overline
{\Omega}$ together with their first partial derivatives. It is easy to see
that this implies the uniform convergence in $\overline{\Omega}$ of the
sequences $\mathbf{T}_{f}\mathbf{T}_{g}P_{M}=U_{M}\rightarrow u=\mathbf{T}%
_{f}\mathbf{T}_{g}v$, $\frac{\partial U_{M}}{\partial x}\rightarrow
\frac{\partial u}{\partial x}$ and $\frac{\partial U_{M}}{\partial
y}\rightarrow\frac{\partial u}{\partial y}$. Indeed, the uniform convergence
of $U_{M}$ to $u$ follows directly from the boundedness of the transmutation
operators, see the proof of Theorem \ref{ThCompl}, and the verification of the
uniform convergence of the partial derivatives is straightforward. Consider%
\[
\frac{\partial}{\partial x}\mathbf{T}_{f}P_{M}(x,y)=\frac{\partial}{\partial
x}P_{M}(x,y)+\frac{\partial}{\partial x}\int_{-x}^{x}\mathbf{K}_{f}%
(x,t;h)P_{M}(t,y)dt
\]%
\[
=\frac{\partial}{\partial x}P_{M}(x,y)+\int_{-x}^{x}\frac{\partial}{\partial
x}\mathbf{K}_{f}(x,t;h)P_{M}(t,y)dt+\mathbf{K}_{f}(x,x;h)P_{M}(x,y)+\mathbf{K}%
_{f}(x,-x;h)P_{M}(-x,y)
\]
from where due to the uniform convergence of $P_{M}$ to $v$ together with
their partial derivatives and due to the fact that $\frac{\partial}{\partial
x}\mathbf{K}_{f}(x,t;h)$ is continuous, it follows that $\frac{\partial U_{M}%
}{\partial x}\rightarrow\frac{\partial u}{\partial x}$ uniformly in
$\overline{\Omega}$. The proof of the uniform convergence of $\frac{\partial
U_{M}}{\partial y}$ to $\frac{\partial u}{\partial y}$ is analogous.

Now, the completeness of $\left\{  u_{m}\right\}  _{m=0}^{\infty}$ in
$\Sigma_{\alpha}^{q}$ with respect to the norm generated by $<\cdot,\cdot
>_{1}$ follows from the uniform convergence of $U_{M}$ to $u$ in
$\overline{\Omega}$ and hence from the completeness of $\left\{
u_{m}\right\}  _{m=0}^{\infty}$ with respect to the maximum norm in
$\overline{\Omega}$. To verify the completeness of $\left\{  u_{m}\right\}
_{m=0}^{\infty}$ in $\Sigma_{\alpha}^{q}$ with respect to the norm generated
by $<\cdot,\cdot>_{2}$ consider the following chain of relations%
\[
\left\Vert u-U_{M}\right\Vert =\int_{\partial\Omega}\frac{\partial\left(
u-U_{M}\right)  }{\partial n}\frac{\partial\left(  u^{\ast}-U_{M}^{\ast
}\right)  }{\partial n}ds\leq\int_{\partial\Omega}\left\vert \nabla
(u-U_{M})\right\vert ^{2}\,ds\leq L\underset{\partial\Omega}{\sup}\left\vert
\nabla(u-U_{M})\right\vert ^{2}\rightarrow0.
\]

\end{proof}

\begin{theorem}
Let $\Omega\subset\mathbb{R}^{2}$ be a simply connected domain such that
together with any point $(x,y)$ belonging to $\Omega$ the point $(x,-y)$ and
the segment joining $(x,y)$ with $(x,-y)$ belong to $\Omega$ as well. Let
equation (\ref{Schr}) in $\Omega$ admit a particular solution of the form
$\phi(x,y)=f(x)g(y)$ where $f$ is a real-valued and $g$ is a $\mathbb{C}_{i}%
$-valued, both twice continuously differentiable and nonvanishing up to the
boundary functions. Then $\left\{  u_{m}\right\}  _{m=0}^{\infty}$ is complete
in $\Sigma_{\alpha}^{q}$, $\alpha>0$ with respect to both norms generated by
the scalar products (\ref{otherscalarproducts}).
\end{theorem}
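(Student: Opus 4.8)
The plan is to combine the single-transmutation device from the proof of Theorem \ref{ThComplLessSymmetry} with the up-to-the-boundary convergence argument from the proof of Theorem \ref{ThComplUpBoundary}. The point is that because $f$ is real-valued, $q_1=f^{\prime\prime}/f$ is real-valued as well, so the intermediate equation $(-\Delta+q_1(x))v=0$ has real coefficients. This is exactly the separable real-valued case for which \cite{CCK} already established completeness of the corresponding family up to the boundary, and only the transmutation $\mathbf{T}_g$ in the variable $y$ (carrying the complex factor $g$) is needed, so the symmetry of $\Omega$ with respect to $y$ alone suffices.

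First I would take $u\in\Sigma_{\alpha}^{q}$ and set $v:=\mathbf{T}_g^{-1}u$. Since the kernel of $\mathbf{T}_g^{-1}$ is at least a $C^1$-function and $\mathbf{T}_g$ acts only on $y$, the operator $\mathbf{T}_g^{-1}$ carries $\Sigma_{\alpha}^{q}$ into $\Sigma_{\alpha}^{q_1}$; thus $v$ is a solution of $(-\Delta+q_1(x))v=0$ in $\Omega$ of class $C^{2}(\Omega)\cap C^{1+\alpha}(\overline{\Omega})$. (That $v$ solves the real-coefficient equation follows from the intertwining $(-\Delta+q)\mathbf{T}_g=\mathbf{T}_g(-\Delta+q_1)$ and the injectivity of $\mathbf{T}_g$.) Invoking the completeness result of \cite{CCK} for this real-coefficient equation, I obtain a sequence of finite linear combinations $\widehat{U}_M=\sum_m\alpha_m\widehat{u}_m$, where $\widehat{u}_m$ are the functions (\ref{um0})--(\ref{umeven}) built with $g\equiv1$, such that $\widehat{U}_M\to v$ uniformly in $\overline{\Omega}$ together with their first partial derivatives.

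Next I would apply $\mathbf{T}_g$. Since $u_m=\mathbf{T}_g\widehat{u}_m$, the images $U_M:=\mathbf{T}_g\widehat{U}_M=\sum_m\alpha_m u_m$ are finite linear combinations of the $u_m$, and $u=\mathbf{T}_g v$. Uniform convergence $U_M\to u$ on $\overline{\Omega}$ is immediate from boundedness of the Volterra operator $\mathbf{T}_g$, exactly as in Theorem \ref{ThCompl}. For the derivatives the $x$-direction is in fact easier than in Theorem \ref{ThComplUpBoundary}: since $\mathbf{K}_g$ does not depend on $x$, the operator $\partial/\partial x$ commutes with $\mathbf{T}_g$, so $\partial U_M/\partial x=\mathbf{T}_g(\partial\widehat{U}_M/\partial x)\to\partial u/\partial x$ again by boundedness. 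For the $y$-direction I would differentiate the integral $\int_{-y}^{y}\mathbf{K}_g(y,t;g^{\prime}(0))\widehat{U}_M(x,t)\,dt$ by the Leibniz rule, collecting the continuous boundary terms $\mathbf{K}_g(y,\pm y;g^{\prime}(0))$ and the integral of the continuous $\partial_y\mathbf{K}_g$, precisely the computation carried out at the end of Theorem \ref{ThComplUpBoundary}; the uniform convergence of $\widehat{U}_M$ and $\partial\widehat{U}_M/\partial y$ then yields $\partial U_M/\partial y\to\partial u/\partial y$ uniformly on $\overline{\Omega}$.

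Finally, completeness with respect to $<\cdot,\cdot>_{1}$ follows by restricting the uniform convergence $U_M\to u$ to $\partial\Omega$, and completeness with respect to $<\cdot,\cdot>_{2}$ follows from the same chain of inequalities bounding $\|u-U_M\|$ by $L\sup_{\partial\Omega}|\nabla(u-U_M)|^{2}\to0$ used to close the proof of Theorem \ref{ThComplUpBoundary}. I expect the only genuinely delicate point to be the invocation of \cite{CCK}: one must verify that its statement truly provides approximation up to the boundary together with first derivatives for the real-coefficient equation $(-\Delta+q_1(x))v=0$, which is the real-valued separable case it treats; everything downstream is transported by the single bounded transmutation $\mathbf{T}_g$ and its bounded inverse.
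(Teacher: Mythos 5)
Your proposal is correct and follows essentially the same route as the paper, whose proof is exactly the combination you describe: the single-transmutation reduction $u_m=\mathbf{T}_g\widehat{u}_m$ with completeness of $\{\widehat{u}_m\}$ in $\Sigma_{\alpha}^{q_1}$ from \cite{CCK} (as in Theorem \ref{ThComplLessSymmetry}), followed by the up-to-the-boundary convergence argument of Theorem \ref{ThComplUpBoundary}. Your observation that $\partial/\partial x$ commutes with $\mathbf{T}_g$, simplifying the $x$-derivative convergence, is a nice explicit detail the paper leaves implicit in its ``proved analogously'' remark.
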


\begin{proof}
The first part of the proof is similar to that of Theorem
\ref{ThComplLessSymmetry}. We have that $u_{m}=\mathbf{T}_{g}\widehat{u}_{m}$,
$m\in\mathbb{N}_{0}$ where $\left\{  \widehat{u}_{m}\right\}  _{m=0}^{\infty}$
is complete in $\Sigma_{\alpha}^{q_{1}}$ (see \cite{CCK}) with respect to the
required norms. Then the completeness of $\left\{  u_{m}\right\}
_{m=0}^{\infty}$ is proved analogously to the proof of Theorem
\ref{ThComplUpBoundary}.
\end{proof}

\begin{remark}
In order to prove the completeness of the family of solutions $\left\{
u_{m}\right\}  _{m=0}^{\infty}$ in $\Sigma_{\alpha}^{q}$ with respect to both
norms generated by the scalar products (\ref{otherscalarproducts}) under less
restrictive conditions on the shape of the domain $\Omega$ in fact we need a
result on the existence of an infinite system of solutions of (\ref{Schr}) in
$R\supset\Omega$ and complete in $\Sigma_{\alpha}^{q}(\partial\Omega)$ or in a
maximum norm in $\overline{\Omega}$. If such a system exists then according to
Theorem \ref{ThCompl} every element of it can be approximated arbitrarily
closely by linear combinations of functions $u_{m}$ which would allow one to
prove the completeness of $\left\{  u_{m}\right\}  _{m=0}^{\infty}$. Thus, if
such a complete system exists then $\left\{  u_{m}\right\}  _{m=0}^{\infty}$
is precisely such system. The question on the existence requires further study.
\end{remark}

\section{Conclusions}

Transmutation operators for Sturm-Liouville equations are considered and their
new properties concerning the transformation of certain infinite systems of
functions generated by the Sturm-Liouville operators are presented. These
infinite systems of functions slightly generalize the notion of $L$-bases
\cite{Fage} and play an important role in the theory of linear differential
equations. We show how a transmutation operator can be constructed mapping one
such basis into another and give an application of this result obtaining
several theorems on the completeness of certain families of solutions of
two-dimensional stationary Schr\"{o}dinger equations which are obtained as
scalar parts of bicomplex pseudoanalytic formal powers. To our best knowledge
this is the first result of this kind in bicomplex pseudoanalytic function
theory. Its importance is in the fact that it opens the way for construction
of Bergman-type reproducing kernels for corresponding second-order elliptic
equations with variable complex-valued coefficients and hence for solving
boundary and eigenvalue problems.

\end{document}